\begin{document}
\newcommand{\ana}{\nabla}
\newcommand{\R}{I \!  \! R}
\newcommand{\N}{I \!  \! N}
\newcommand{\Ren}{ I \! \! R^n}


\newcommand{\be}{\begin{equation}}
\newcommand{\ee}{\end{equation}}
\newcommand{\bea}{\begin{eqnarray}}
\newcommand{\eea}{\end{eqnarray}}
\newcommand{\bean}{\begin{eqnarray*}}
\newcommand{\eean}{\end{eqnarray*}}
\newcommand{\la}{\label}
\newcommand{\bx}{\bar{x}}


\newcommand{\pp}{\partial}
\newcommand{\inn}{\quad\hbox{in }}
\newcommand{\xa}{\alpha}
\newcommand{\xb}{\beta}
\newcommand{\xg}{\gamma}
\newcommand{\RR}{\mathbb{R}}
\renewcommand{\R}{\mathbb{R}}
\newcommand{\xG}{\Gamma}
\newcommand{\foral}{\quad\hbox{for all }} 
\newcommand{\ttt}{\tilde}
\newcommand{\xd}{\delta}
\newcommand{\xD}{\Delta}
\newcommand{\xe}{\varepsilon}
\newcommand{\xz}{\zeta}
\newcommand{\xh}{\eta}
\newcommand{\Th}{\Theta}
\newcommand{\xk}{\kappa}
\newcommand{\xl}{\lambda}
\newcommand{\xL}{\Lambda}
\newcommand{\CC}{\mathcal{C}}
\newcommand{\ve}{\varepsilon}
\newcommand{\xm}{\mu}
\newcommand{\xn}{\nu}
\newcommand{\ks}{\rho}
\newcommand{\KS}{\Xi}
\newcommand{\xp}{\pi}
\newcommand{\xP}{\Pi}
\newcommand{\xr}{\rho}
\newcommand{\xs}{\sigma}
\newcommand{\xS}{\Sigma}
\newcommand{\xf}{\phi}
\newcommand{\xF}{\Phi}
\newcommand{\ps}{\psi}
\newcommand{\PS}{\Psi}
\newcommand{\xo}{\omega}
\newcommand{\xO}{\Omega}
\newcommand{\tA}{\tilde{A}}
\newcommand{\tC}{\bar{k}_s}
\newcommand{\qcap}{C_{\frac{2}{q},\;q'}}
\newcommand{\qsub}{\subset^q}
\newcommand{\qeq}{\sim^q}
\newcommand{\finedim}{{\hfill $\Box$}}


\newtheorem{theorem}{Theorem}[section]
\newtheorem{lemma}[theorem]{Lemma}
\newtheorem{prop}[theorem]{Proposition}
\newtheorem{coro}[theorem]{Corollary}
\newtheorem{defin}[theorem]{Definition}
\newtheorem{remark}[theorem]{Remark}
\newtheorem{notation}[theorem]{Notation}
\newcounter{newsec} \renewcommand{\theequation}{\thesection.\arabic{equation}}

\title{Ancient shrinking spherical interfaces in the Allen-Cahn flow }

\author{{\bf \large Manuel del Pino}\\
{\small  Departamento de
Ingenier\'{\i}a  Matem\'atica}\\
{\small and Centro de Modelamiento
 Matem\'atico (UMI 2807 CNRS) }\\
{\small  Universidad de Chile,
Casilla 170 Correo 3, Santiago,
Chile.}\\
{\it\small email: delpino@dim.uchile.cl}\\[0.5cm]
{\bf\large   Konstantinos T. Gkikas}\\
 \small Centro de Modelamiento Matem\'atico (UMI 2807 CNRS),\\
 \small Universidad de Chile,\\
\small Casilla 170 Correo 3, Santiago,
Chile.\\
{\emph{\small email: kgkikas@dim.uchile.cl}}}

\maketitle

\begin{abstract}
 We consider the parabolic Allen-Cahn equation in $\R^n$, $n\ge 2$,
$$u_t=  \Delta u + (1-u^2)u \quad \hbox{ in } \R^n \times (-\infty, 0].$$
We construct an ancient  radially symmetric solution $u(x,t)$  with any given number $k$ of transition layers between $-1$ and $+1$. At main order they consist of $k$ time-traveling copies of $w$ with spherical  interfaces distant $O(\log |t|  )$ one to each other as $t\to -\infty$.  These interfaces
are resemble at main order copies of the {\em shrinking sphere} ancient solution to mean the flow by mean curvature of surfaces:  $|x| = \sqrt{- 2(n-1)t}$.
More precisely, if $w(s)$ denotes the heteroclinic 1-dimensional solution of $w'' + (1-w^2)w=0$  $w(\pm \infty)= \pm 1$ given by
$w(s) = \tanh \left(\frac s{\sqrt{2}} \right) $ we have
$$
u(x,t) \approx   \sum_{j=1}^k (-1)^{j-1}w(|x|-\rho_j(t))  - \frac 12 (1+ (-1)^{k}) \quad \hbox{ as } t\to -\infty
$$
where
$$\rho_j(t)=\sqrt{-2(n-1)t}+\frac{1}{\sqrt{2}}\left(j-\frac{k+1}{2}\right)\log\left(\frac {|t|}{\log |t| }\right)+ O(1),\quad j=1,\ldots ,k.$$


\end{abstract}

\date{}


\setcounter{equation}{0}
\section{Introduction}
A classical model for phase transitions is the Allen-Cahn equation \cite{ac}
\be u_t = \Delta u + f(u) \inn \R^n \times \R  \label{ac0} \ee
where $f(u) = - F'(u)$ where $F$ is a {\em balanced bi-stable potential}
namely $F$ has exactly two non-degenerate global minimum points $u=+1$ and $u=-1$. The model is
\be \label{F} F(u) = - \frac 14(1-u^2)^2, \quad f(u) = (1-u^2)u . \ee
The constant functions  $u=\pm 1$ correspond to stable equilibria of Equation \eqref{ac0}.  They are idealized as two phases of a
material. A solution $u(x,t)$ whose values lie at all times in $[-1,1]$ and in most of the space $\R^n$ takes values close to either $+1$ or $-1$ corresponds to a continuous realization of the phase state of the material, in which the two stable states coexist.

\medskip
There is a broad literature on this type of solutions (in the static and dynamic cases). The main point is to derive qualitative
information on the ``interface region'', that is the walls separating the two phases. A close connection between these walls and minimal surfaces and surfaces evolving by mean curvature has been established in many works. To explain this connection,
it is convenient to introduce a small parameter $\ve$ and consider the scaled version of
\eqref{ac0}  for $u^\ve (x,t) = u(\ve x, \ve^2t)$,
\be u^\ve_t = \Delta u^\ve  + \varepsilon^{-2} f(u^\ve ) . \label{ac1} \ee
Let us consider a smooth embedded, orientable  hypersurface $\Sigma_0$ that separates $\R^n \setminus \Sigma_0$ into two components  $\Lambda_0^-$ and $\Lambda_0^+$ and
the characteristic function
$$
u_{\Sigma_0} (x) = \left \{ \begin{matrix}- 1  &\hbox{ if }  x\in \Lambda_0^-\\  +  1  &\hbox{ if }  x\in \Lambda_0^+\\    \end{matrix} \right .\ .
$$
The following principle  (in suitable senses) has been explored in a number of works: the solution $u^\ve(x,t)$ of equation \eqref{ac1} with initial condition $u^\ve(x,0) $ given by a suitable $\ve$-regularization of $u_{\Sigma_0} (x)$ satisfies
\be\label{limit}
\lim_{\ve \to 0 } u^\ve(x,t) \ =\ u_{\Sigma(t)} (x) , \quad t> 0,
\ee
where the surfaces $\Sigma(t)$ in $\R^n$ {\em evolve by mean curvature}. In the smooth case this means that
 each point of $\Sigma(t)$ moves in the normal direction
with a velocity proportional to its mean curvature at that point.
More precisely, there is a smooth family of  diffeomorphisms  $Y(\cdot, t) :\Sigma_0 \to \Sigma(t)$, $t>0$ with $\quad Y(y,0) = y$,
determined by the mean curvature flow equation

\be
\frac {\pp Y}{\pp t}   = H_{\Sigma(t)} (Y) \nu ( Y),
\label{mcf}\ee
where
$H_{\Sigma(t)} (Y) $ 
  designates the mean curvature of the surface $\Sigma(t)$ at the point $Y(y, t)$, $y\in \Sigma_0$, namely the trace of its second fundamental form, $\nu$ is a choice of unit normal vector that points towards $\Lambda_+$ at $t=0^+$.
Besides \eqref{limit}, the profile of $u^\ve$ near the surface $\Sigma(t)$ is given by
\be
u^\ve(x,t )  \approx  w \left( \frac s\ve  \right ) , \quad x= Y + s \nu(Y),
\label{limit2}\ee
where $w(s)$ is the unique (heteroclinic) solution to
$$
w'' + f(w) = 0 \inn \R, \quad w(0)=0,  \quad w(\pm \infty) =\pm 1
$$
which exists and it is monotone. In the special case \eqref{F}, it is given by $$ w(s) = \tanh\left ( \frac s{\sqrt{2}} \right). $$
These asymptotic laws were first suggested by Allen-Cahn \cite{ac}, then formally derived by Rubinstein-Sternberg-Keller \cite{rsk} and
de Mottoni-Schatzmann \cite{ds}. Rigorous results on this line were obtained in the radial case by Bronsard-Kohn \cite{bk}, and more in general by X. Chen \cite{chen}.  In \cite{ilmanen}, Ilmanen proved the convergence (in a measure theoretical sense) to Brakke's motion by mean curvature, for a setting not necessarily regular. S\'aez \cite{saez} investigated the (smooth) link in $\R^2$ with curve-shortening flow.

In the static case, the connection between interfaces and minimal surfaces $\Sigma$, namely  $H_\Sigma =0$, has been investigated in many works starting with Modica \cite{modica},
giving rise in particular to De Giorgi's conjecture on the connection of the elliptic Allen-Cahn equation with Bernstein's problem \cite{dg}.
 See for instance \cite{dkw1,dkw2,dkw3,ks,pacardritore,tonegawa,savin} and their references.

\medskip
In the radial case where $ \Sigma(t) =  \rho(t)S^{n-1}   $,
it is easily checked that  equation \eqref{mcf} reduces to the ODE
$$
\rho'(t) =  -\frac{n-1}{\rho(t)} ,
$$
which yields the  ``ancient'' {\em shrinking sphere solution}
\be\label{ss}
\rho_*(t) =  \sqrt{ -2(n-1) t }, \quad   -\infty < t <   0  .
\ee
The result by Bronsard and Kohn \cite{bk} can be phrased like this:  given a compact interval $I\subset (-\infty, 0)$, there exists
 a radial solution $u^\ve_I (r,t)$  of \eqref{ac1} that satisfies
\eqref{limit} for $t\in I$.

\medskip
In this paper we will construct {\em ancient solutions} to Equation \eqref{ac0},  with one or more transition layers close to the shrinking sphere \eqref{ss}
at all negative times.
Because of self-similarity, we see that the transition layer $|x| = \rho_*(t)$ for a solution $u^\ve$ of  \eqref{ac1} corresponds to the same
region for $u(x,t) = u^\ve (\ve x, \ve^2 t)$, solution of \eqref{ac0}.
Thus in what follows we consider the problem

\be u_t = \Delta u + f(u) \inn \R^n \times (-\infty,0], \quad f(u) = (1-u^2)u.   \label{ac2} \ee
We prove

\begin{theorem} \label{teo1}
There exists a radial solution $u(x,t)$ of equation \eqref{ac2} such that
$$
u(x,t)  =  w( |x| - \rho(t) )   + \phi (x,t)
$$
where
$$
\rho(t) = \sqrt{ -2(n-1) t }  + O(1)  \quad\hbox{as }  t \to - \infty ,
$$
where
$$
\lim_{t\to -\infty} \phi (x,t) = 0 \quad\hbox{uniformly in } x\in \R^n .
$$

\end{theorem}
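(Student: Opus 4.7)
The plan is to perform an infinite-dimensional Lyapunov--Schmidt reduction around the one-parameter approximation
$$
u_0(x,t)=w(r-\rho(t)),\qquad r:=|x|,
$$
with $\rho(t)$ and the correction $\phi$ in $u=u_0+\phi$ to be determined on $(-\infty,T_0]$ for some sufficiently negative $T_0$; on the bounded interval $[T_0,0]$ the solution is then obtained by standard parabolic theory. A direct computation using $w''+f(w)=0$ gives the error
$$
E_0:=\Delta u_0+f(u_0)-\partial_t u_0=\Bigl(\rho'(t)+\tfrac{n-1}{r}\Bigr)w'(r-\rho(t)),
$$
and, because $w'$ is exponentially concentrated at $r=\rho(t)$, this quantity is small only when $\rho'+\tfrac{n-1}{\rho}\approx 0$, i.e.\ when $\rho$ is close to the shrinking sphere $\rho_*(t)=\sqrt{-2(n-1)t}$. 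I therefore write $\rho(t)=\rho_*(t)+h(t)$ with $h\in L^\infty(-\infty,T_0]$ a small unknown.

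Substituting $u=u_0+\phi$ gives
$$
\partial_t\phi-\Delta\phi-f'(u_0)\phi=E_0+N(\phi),\qquad N(\phi)=f(u_0+\phi)-f(u_0)-f'(u_0)\phi,
$$
with $N$ quadratic in $\phi$. The one-dimensional linearised operator $L:=\partial_{ss}+f'(w(s))$ has kernel spanned by $w'>0$ (its ground state) and is negative definite on its $L^2$-orthogonal complement. Following the standard scheme, impose the orthogonality
$$
\int_0^\infty\phi(r,t)\,w'(r-\rho(t))\,r^{n-1}\,dr=0 \quad\text{for all } t\le T_0,
$$
and compensate by a Lagrange multiplier $c(t)\,w'(r-\rho)$ on the right-hand side. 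Projecting the resulting equation against $w'(r-\rho)$ and using $\int(w')^2\,ds>0$ produces a nonlinear ODE for $h$ of the form
$$
\dot h(t)+\frac{n-1}{\rho_*(t)^2}\,h(t)=\mathcal F[h,\phi](t),
$$
where $\mathcal F$ collects all remaining contributions and is controlled by $(h,\phi)$ and by the curvature correction $\tfrac{n-1}{r}-\tfrac{n-1}{\rho}$ integrated against $(w')^2$.

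The analytic heart of the argument is an ancient linear theory for
$$
\partial_t\phi-\Delta\phi-f'(w(r-\rho))\phi=g,\qquad \phi\perp w'(r-\rho),
$$
on $(-\infty,T_0]$, with source $g$ satisfying the same orthogonality and decaying in time. The strategy is to solve the Cauchy problem on $[-T,T_0]$ with zero initial datum, derive weighted $L^\infty$ a-priori bounds via barriers combining an exponential factor in the transverse variable $r-\rho(t)$ with a temporal weight $|t|^{-\alpha}$ inherited from $g$, and then pass to the limit $T\to\infty$ by compactness while preserving the orthogonality. The barriers exploit the uniform spectral gap of $L$ on $(w')^\perp$ together with the coercivity at infinity coming from $f'(\pm1)=-2$. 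With this in hand the full problem is closed by a Banach contraction in a small ball of the weighted space, coupling $\phi$ (from the linear theory applied to $E_0+N(\phi)$) with $h$ (from the reduced ODE, whose growing homogeneous mode $|t|^{1/2}$ is eliminated by the correct choice of initial data at $T_0$). This produces $\rho(t)=\sqrt{-2(n-1)t}+O(1)$ and $\|\phi(\cdot,t)\|_{L^\infty}\to 0$ as $t\to-\infty$. The main obstacle is precisely the ancient linear theory: since no initial condition is available at $t=-\infty$, one must select the decaying branch through orthogonality and design barriers that simultaneously dominate the transverse profile and the temporal decay rate dictated by the error $E_0$, which is only of size $|t|^{-1}$ in suitable norms.
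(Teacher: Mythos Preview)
Your proposal is correct and follows essentially the same Lyapunov--Schmidt scheme as the paper: project away the approximate kernel $w'(r-\rho)$, build the ancient linear theory by solving Cauchy problems on $[s,-T_0]$ with zero data and sending $s\to-\infty$ under uniform weighted estimates obtained from barriers (using coercivity from $f'(\pm1)=-2$ away from the interface and the spectral gap of $\partial_{ss}+f'(w)$ on $(w')^\perp$ near it), then close the nonlinear problem by a contraction and finally determine $h$ from the reduced ODE $h'+\tfrac{1}{2|t|}h=\mathcal F$ with data at $-T_0$ that kills the growing homogeneous mode $|t|^{1/2}$. The only notable difference is in the bookkeeping of weights: the paper encodes both transverse and temporal decay in a single spatial weight $\Phi(t,r)\sim e^{-\sigma|r-\rho^0(t)|}$ (so that $\|E_0/\Phi\|_\infty$ is small), whereas you propose separating them into an exponential transverse factor times $|t|^{-\alpha}$; either choice works for $k=1$, though the paper's convention is better adapted to the multi-interface case $k\ge2$ where interactions between neighbouring layers force the weight to see all of the $\rho_j^0$.
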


Our second result extends Theorem \ref{teo1} to the case of ancient solutions with  {\em multiple interfaces}. Given $k\ge 1$, the point is to find solutions of the form
\be
u(x,t)  =  \sum_{i=1}^k   (-1)^{j-1}w ( |x| - \rho_j(t) )  - \frac 12 ( 1+ (-1)^{k-1})  \ +\ \phi(x,t)
\label{forma} \ee
for a lower order perturbation $\phi(x,t)$ as $t\to -\infty$ and  functions
\be
\rho_1(t)  < \rho_2(t) < \cdots <  \rho_k(t) .
\label{nn}\ee
which at main order satisfy  $\rho_j(t) \sim    \sqrt{ -2(n-1) t } $ .  We prove

\begin{theorem} \label{teo2} Given any $k\ge 1$, there exist functions $\rho_j(t)$ as in \eqref{nn}
with
\be {\rho}_j(t)=   \sqrt{-2(n-1)t}  + \frac{1}{\sqrt{2}}\left(j-\frac{k+1}{2}\right)\log\left(\frac{|t|}{\log |t|}\right)+ O(1),\quad j=1,\ldots ,k,\;\; 
\label{forma2}\ee
as $t\to -\infty$,
and a radial ancient solution $u(x,t)$ of equation \eqref{ac2} of the form \eqref{forma} so that
$$
\lim_{t\to -\infty} \phi (x,t) = 0 \quad\hbox{uniformly in } x\in \R^n .
$$

\end{theorem}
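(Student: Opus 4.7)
The plan is to extend the strategy used for Theorem \ref{teo1} (the single-interface case) via an infinite-dimensional Lyapunov--Schmidt reduction, but now with $k$ layer positions $\rho_1(t),\ldots,\rho_k(t)$ playing the role of free parameters. First, I set up the ansatz
\[
U(x,t) = \sum_{j=1}^{k} (-1)^{j-1} w(|x| - \rho_j(t)) - \tfrac{1}{2}(1 + (-1)^{k-1}),
\]
and look for the solution in the form $u = U + \phi$ with $\phi$ a small remainder. Substituting into \eqref{ac2} produces an error $\mathcal{E}[U]$, a linear term $L\phi$, and a quadratic term $N(\phi)$. Using the radial Laplacian $\Delta = \partial_{rr} + (n-1)r^{-1}\partial_r$ and expanding $f$ around each kink, the error near the $j$-th interface is, at leading order,
\[
\mathcal{E}[U] \approx w'(r - \rho_j)\Bigl(\dot{\rho}_j + \tfrac{n-1}{r}\Bigr) \;+\; \text{(interaction terms)},
\]
where the interactions come from the cubic nonlinearity when two adjacent layers overlap and decay exponentially in the layer separation as $e^{-\sqrt{2}(\rho_{j+1}-\rho_j)}$ and $e^{-\sqrt{2}(\rho_j-\rho_{j-1})}$.

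Next, I derive the reduced system by imposing that the $L^2$-projection of the full residual onto each translation mode $w'(r - \rho_j)$ vanishes. Since $w'$ spans the $L^2$ kernel of the one-dimensional linearized operator at $w$, this yields solvability conditions that, at main order, take the schematic form
\[
\dot{\rho}_j + \frac{n-1}{\rho_j} = \gamma \Bigl(e^{-\sqrt{2}(\rho_j - \rho_{j-1})} - e^{-\sqrt{2}(\rho_{j+1} - \rho_j)}\Bigr) + \text{lower order},
\]
for $j=1,\ldots,k$, with the convention that the interaction terms involving $\rho_0$ or $\rho_{k+1}$ are absent and with an explicit constant $\gamma$ determined by $\int_{\R}(w')^2$ and the exponential tails of $w$. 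Substituting the ansatz in \eqref{forma2}, namely $\rho_j(t)=\sqrt{-2(n-1)t} + \frac{1}{\sqrt{2}}(j-\tfrac{k+1}{2})\log(|t|/\log|t|) + h_j(t)$, one verifies that the logarithmic terms are precisely tuned so that the curvature drift and the interaction are of the same order and, after cancellation, produce a linear, diagonally dominant system for the bounded corrections $h_j(t)$. This system is solved by a contraction or shooting argument on intervals $(-\infty,T_0]$ with $T_0 \ll -1$, yielding the $O(1)$ asymptotics claimed in \eqref{forma2}.

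With the parameters determined up to a bounded error, the remainder $\phi$ is constructed by a fixed-point argument in a weighted norm that encodes exponential decay transverse to each interface together with an appropriate algebraic decay in $|t|$. The required linear theory for $L$, restricted to right-hand sides $L^2$-orthogonal to the translation modes $w'(r-\rho_j)$, is set up by inverting the one-dimensional linearization layer by layer and treating the curvature term $(n-1)r^{-1}\partial_r$ and the inter-layer couplings as perturbations. The main obstacle, absent in the single-interface Theorem \ref{teo1}, is that once $|t|$ is large the interaction between adjacent interfaces is of the same order as the mean-curvature driving term, so the logarithmic spacing in \eqref{forma2} must exactly balance the two effects; obtaining estimates on $h_j(t)$ that are sharp enough to close the outer fixed-point for $\phi$ is the most delicate step, and is where careful ODE analysis of the reduced system combines with weighted parabolic estimates for the linearization around $U$.
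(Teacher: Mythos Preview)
Your proposal follows the same Lyapunov--Schmidt strategy as the paper and correctly identifies the Toda-type reduced system and the need for weighted parabolic estimates. Two points, however, must be corrected before the argument closes.

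First, the order of the reduction is reversed. The paper first solves the \emph{projected} nonlinear problem for $\psi = \Psi(h)$ as a function of the interface perturbation $h$ (Proposition~\ref{mainproposition}), with Lipschitz dependence on $h$, and only then substitutes $\Psi(h)$ into the equations $c_i=0$ to obtain a closed system for $h$ (Section~\ref{xiint}). Your scheme---determine the $\rho_j$ first from the leading-order ODE, then solve for $\phi$---does not close as written, because the ``lower order'' terms in the reduced equations depend nontrivially on $\phi$ itself; see the term $\mathbf{F}$ in \eqref{ode*} and Proposition~\ref{remark2}.

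Second, the linearized reduced system for $h=(h_1,\ldots,h_k)$ is \emph{not} diagonally dominant in those variables: the Jacobian of the interaction $\mathbf{R}(\rho)$ annihilates the constant vector (a common shift of all layers leaves every difference $\rho_{j+1}-\rho_j$ unchanged), so the direction $\sum_j h_j$ is a zero mode that the diagonal cannot control. The paper deals with this by passing to the variables $v_l=\rho_{l+1}-\rho_l$ ($l=1,\ldots,k-1$) and $v_k=\sum_j\rho_j$, after which the system decouples: the $(k{-}1)$-dimensional block in the $v_l$ is governed by the positive-definite tridiagonal matrix $\mathbf{C}$ of \eqref{C}, further diagonalized in \eqref{ode4}--\eqref{odepro}, while $v_k$ satisfies a separate scalar equation with no interaction term. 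This decoupling, together with the integrating-factor estimate \eqref{23}, is what makes the contraction on $(-\infty,-T_0]$ work.
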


The main difference between interfaces and surfaces evolving by mean curvature is that in the phase transition model different components do interact giving rise to interesting motion patterns.
When regarded, after $\ve$-scaling, as a solution of equation \eqref{ac1}, the nodal set of
$u^\ve (x,t) = u(\ve^{-1}x,\ve^{-2}t)$ has $k$ components $\rho_{j\ve} (t)$ which on each compact subinterval of  $(-\infty, 0)$ satisfy

$$ \rho_{j\ve} (t) =  \sqrt{-2(n-1) t }  +  \frac{1}{\sqrt{2}}\left(j-\frac{k+1}{2}\right)\, \ve |\log \ve|   +   o(\ve\log\ve ). $$
The phenomenon described is not present in the limiting flow by mean curvature. Indeed there is a nonlocal interaction between the different components of
the interface that leads to equilibrium. Solutions with multiple interfaces had already been constructed in \cite{dkw4}. In that reference the basic interface is a self translating solution surface of mean curvature flow in $\R^{n-1}$, $n\ge 3$ of the form
$$ x_n = p(|x'|) + t , \quad x'\in \R^{n-1}$$ where $p$ is an entire radially symmetric function (at main order $p(r) \sim r^2).$ Traveling wave solutions
of equation \eqref{ac1} were with multiple-component resembling nested collapsing copies of this ``paraboloid'' were found in \cite{dkw4}.  For a single component, this traveling wave solution was first found in \cite{5authors}. The results of this paper can therefore be regarded as compact analogues of the traveling wave phenomenon. An important difference of is the fact that in our current setting  we cannot reduce the problem to the analysis of an elliptic equation and the parabolic problem must be considered all the way up to time $t=-\infty$.
Interaction of interfaces in the one-dimensional case in this problem has already been considered in \cite{carrpego,chen,fuscohale,dgk}, and in the static
higher dimensional setting in \cite{dkw0,dkw3,dkpw}.
As it will become clear in the course of this paper, the dynamics driving the interaction of the different components of the interface for a solution of the form \eqref{forma}
is given at main order by the first-order Toda type system,

\be
\frac{1}{\xb}\left(\rho_j'+\frac{n-1}{\rho_j}\right)-e^{-\sqrt{2}(\rho_{j+1}-\rho_j)}+e^{-\sqrt{2}(\rho_{j}-\rho_{j-1})}=0,\quad j=1,\ldots ,k,\;\quad\;t\in(-\infty, 0]\label{toda01}
\ee
with the conventions
$\rho_{k+1}=\infty\quad \hbox{and}\quad \rho_0=-\infty,$ and a explicit constant $\beta>0$.
A the proof consists of building by a Lyapunov-Schmidt type procedure a solution.  It is made as a  suitable small perturbation of a first approximation where the functions $\rho_j(t)$ are left as parameters to be determined. The procedure reduces the construction to solving  for the $\rho_j$'s from a system which is a small nonlocal, nonlinear  perturbation of
\eqref{toda1}. We carry out this procedure in the following sections.


\setcounter{equation}{0}
\section{The ansatz}

We will only consider in the proof of Theorem \ref{teo2} the case of an even number $k\ge 2$.
The odd situation (including the case $k=1$ of Theorem \ref{teo1}) is similar.

\medskip
 Setting $r=|x|$ and with some abuse of notation $u= u(t,r)$.
 We want to find a $k$-layer solution to the equation

\be\label{rad} u_t=u_{rr}+\frac{n-1}{r}u_r+f(u), \quad \hbox {for all } \; (t,r)\in (-\infty,-T] \times(0,\infty). \ee
 $$ u_r (t,0) = 0 \qquad \hbox {for all } t\in (-\infty, -T] . $$
 for a large, given $T>0$.
Let
 $w(s)=\tanh(\frac{s}{\sqrt{2}})$ and $k\geq2$ be an even natural number. We set
$$w_j(t,r)=w(r-\rho_j(t)),$$
where the functions $\rho_i(t)$ are ordered,
$$0<\rho_1(t)<\cdots <\rho_k(t).$$
Our purpose is to find a solution of \eqref{ac2} of the form
\be
u(t,r)=\sum_{j=1}^{k}(-1)^{j-1}w_j(t,r)-1+\psi(t,r).\label{form}
\ee
where the functions $\rho_j(t)$ are required to satisfy at main order the system 

\be
\frac{1}{\xb}\left(\rho_j'+\frac{n-1}{\rho_j}\right)-e^{-\sqrt{2}(\rho_{j+1}-\rho_j)}+e^{-\sqrt{2}(\rho_{j}-\rho_{j-1})}=0,\quad j=1,\ldots ,k,\;\quad\;t\in(-\infty, -T]\label{toda1}
\ee
with the conventions
$\rho_{k+1}=\infty\quad \hbox{and}\quad \rho_0=-\infty,$ and a explicit constant $\beta>0$ given by \eqref{beta} below.  
We will prove in Section 5 that system \eqref{toda1} has a solution with the following form 
\be\label{forma4}
\rho_j(t) = \rho^0_j(t)  + h_j(t)
\ee
where $h_j(t) = O( (\log|t|)^{-1}) $ as $t\to-\infty$ and   $\rho^0_j(t) $ takes the form 
\be \label {fff}
\rho^0_j(t) = \sqrt{-2(n-1)t} + (j-\frac{k+1}{2})\eta +\xg_j
\ee
where the $\xg_j$ are explicit constants (given in Lemma \ref{lemodesimpl2}) and $\eta(t)$ solves the ODE  
\begin{align}
\eta'+\frac{1}{2t}\eta+ e^{-\sqrt{2}\eta}&=0,\quad t\in(-\infty,-1]\\
\eta(-1)&=0,\label{odesimp21}
\end{align}
which according to Lemma \ref{lemodesimpl1}
satisfies as $t\to -\infty$,

\begin{align}
\eta(t) =   \frac{1}{\sqrt{2}}\log\left(\frac{|t|}{\log |t|}\right) + O(1).
\end{align}
and $\xg_j$ are the constants defined in Lemma \ref{lemodesimpl2}. 
Let us set $\rho(t)=(\rho_1(t),\ldots ,\rho_k(t))$ and  write
\be
\rho(t):=\rho^0(t)+h(t),\label{ksij}
\ee
where the $\rho^0_j$'s are the functions in  \eqref{fff} and the (small) functions $h_j(t)$ are parameters to be found, on which we only a priori assume
$$ \sup_{t\leq -2}|{h}(t)|+\sup_{t\leq -2}\frac{|t|}{\log|t|}|{h}'(t)|<1.  $$ 
We look for a solution of equation \eqref{rad} of the form \eqref{form}.
We set 
\be
z(t,x)=\sum_{j=1}^{k}(-1)^{j+1}w_j(t,x)-1\label{z}
\ee
and consider the following projected version of equation \eqref{rad} in terms of $\psi$:  

\begin{align}\nonumber
\psi_t&=\psi_{rr}+\frac{n-1}{r}\psi_r+f'(z(t,r))\psi+E+N(\psi)\\ &-\sum_{i=1}^k c_i(t)w'(r-\rho_i(t)),\qquad\qquad\mathrm{in}\;\;(-\infty,-T)\times(0,\infty)\\
\label{mainpro}
\end{align}
and
\be
\int_0^\infty r^{n-1}\psi(t,r)w'(r-\rho_i(t))dr=0,\quad\hbox{for all }  i=1,\ldots ,k,\;t<-T. \label{orthcond}
\ee
where
\begin{align}\nonumber
E=\sum_{j=1}^{k}(-1)^{j+1}&\left(w'(r-\rho_j(t))\rho_j'(t)+\frac{n-1}{r}w'(r-\rho_j(t))\right)+f(z(t,r))\\
&-\sum_{j=1}^{k}(-1)^{j+1}f(w_j(t,r)), \label{error}
\end{align}
$$N(\psi)=f(\psi(t,r)+z(t,r))-f(z(t,r))-f'(z(t,r))\psi,$$
where the functions $c_i(t)$ are chosen so that $\psi$ satisfies the orthogonality condition (\ref{orthcond}), namely in such a way that the following (nearly diagonal) system holds.
\begin{align}\nonumber
\sum_{i=1}^kc_i(t)\int_0^\infty &w'(r-\rho_i(t))w'(r-\rho_j(t))r^{n-1}dr\\ \nonumber
&=-\int_0^\infty \psi_{r}w''(r-\rho_j(t))r^{n-1}dr+\int_0^\infty f'(z(t,r))\psi w'(r-\rho_j(t)) r^{n-1}dr\\ \nonumber
&-\rho'_j(t)\int_0^\infty\psi(t,r)w''(r-\rho_j(t))r^{n-1}dr\\
&+\int_0^\infty(E+N(\psi))w'(r-\rho_j(t))r^{n-1}dr,\qquad\forall i=1,...,k,\;t<-T.\label{ole}
\end{align}

Later we will choose $h(t)$ such that $c_i(t)=0,\;\forall\;i=1,...,k.$
In the following lemma we find a bound for the error term $E=E(t,r)$ in \eqref{error}.
\begin{lemma}
Let $T_0>1,$ $0<\xs<\sqrt{2},$ we define
\bea\nonumber
\xF(t,r)&=&e^{\xs(-r+\rho_{j-1}^0(t))}+e^{\xs(r-\rho_{j+1}^0(t))},\\ \nonumber
&&\mathrm{if}\;\;\qquad\frac{\rho_j^0(t)+\rho_{j-1}^0(t)}{2}\leq r\leq \frac{\rho_j^0(t)+\rho_{j+1}^0(t)}{2},\;j=2,...,k,\\ \nonumber
\xF(t,r)&=&e^{\xs(r-\rho_{2}^0(t))},\quad \text{if}\quad \frac{\rho^0_0(t)+\rho_{1}^0(t)}{2}\leq r\leq \frac{\rho_1^0(t)+\rho_{2}^0(t)}{2}\\
\xF(t,r)&=&e^{\xs(r-\rho_{1}^0(t))},\quad \text{if}\quad r\leq \frac{\rho^0_0(t)+\rho_{1}^0(t)}{2}
\label{bound}
\eea
with $\rho_0^0=\rho_1^0-\eta$ and $\rho_{k+1}^0=\infty.$ Then there exists a uniform constant $C>0$ which depends only on $k,$ such that
$$|E(t,r)|\leq C(1+\frac{1}{r})\xF(t,r),\quad\forall (t,r)\in(-\infty, -T_0]\times(0,\infty),$$
where $E$ is the error term in \eqref{error}, and $\rho$ satisfies the assumptions of this section.\label{remark}
\end{lemma}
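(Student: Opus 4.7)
The strategy is to split $E = E_1 + E_2$, where
\[
E_1(t,r) = \sum_{j=1}^k (-1)^{j+1}\Bigl(\rho_j'(t) + \frac{n-1}{r}\Bigr)w'(r-\rho_j(t)), \quad E_2(t,r) = f(z(t,r)) - \sum_{j=1}^k (-1)^{j+1}f(w_j(t,r)),
\]
and to verify the bound interval by interval on the three regions defining $\Phi$. The basic tools are the exponential decays $|w'(s)|,|1-w(s)^2|\leq Ce^{-\sqrt{2}|s|}$, the a priori smallness $|h_j|<1$ and $|h_j'(t)|\leq \log|t|/|t|$, and the strict inequality $\sigma<\sqrt{2}$.

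For $E_1$, I rewrite each summand as $(\rho_j'+\frac{n-1}{\rho_j})w'(r-\rho_j) + \frac{(n-1)(\rho_j-r)}{r\rho_j}w'(r-\rho_j)$. The key observation is the Toda-type near-cancellation
\[
\rho_j'(t) + \frac{n-1}{\rho_j(t)} = O(e^{-\sqrt{2}\eta(t)}),
\]
obtained by differentiating \eqref{fff} in $t$, using $\eta' + \eta/(2t) = -e^{-\sqrt{2}\eta}$ from \eqref{odesimp21}, Taylor-expanding $(n-1)/\rho_j$ around $(n-1)/\rho_j^0$ up to order $O(1/|t|)$, and absorbing the $h_j,h_j'$ corrections via the a priori bounds (note that $e^{-\sqrt{2}\eta}\sim\log|t|/|t|$ dominates both the $h_j'$ contribution and the Taylor remainder). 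Multiplying by $|w'(r-\rho_j)|\leq Ce^{-\sqrt{2}|r-\rho_j|}$ yields $Ce^{-\sqrt{2}(|r-\rho_j|+\eta)}$ for the first summand. For the second, $|s|e^{-\sqrt{2}|s|}\leq C_\sigma e^{-\sigma|s|}$ combined with $1/\rho_j=O(1/\sqrt{|t|})$ gives $\frac{C}{r\rho_j}e^{-\sigma|r-\rho_j|}$. Both are compared to $(1+1/r)\Phi(t,r)$ by direct exponential manipulations using $\sigma<\sqrt{2}$, the identities $\rho_{j\pm 1}^0=\rho_j^0\pm\eta$, and $|\rho_j-\rho_j^0|<1$; on the interior interval this produces the lower bound $\Phi(t,r)\geq ce^{-\sigma\eta}e^{\sigma|r-\rho_j^0|}$, of exactly the right shape to absorb both terms, and analogous lower bounds handle the boundary intervals.

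For $E_2$ I argue interval by interval. On $[\frac{\rho_{j-1}^0+\rho_j^0}{2},\frac{\rho_j^0+\rho_{j+1}^0}{2}]$ one has $r>\rho_l$ for $l<j$ (with $|w_l-1|\leq Ce^{-\sqrt{2}(r-\rho_l)}$) and $r<\rho_l$ for $l>j$ (with $|w_l+1|\leq Ce^{-\sqrt{2}(\rho_l-r)}$). Collecting the constant and varying parts,
\[
z = (-1)^{j+1}w_j + C_j + \delta, \qquad |\delta|\leq C\bigl(e^{-\sqrt{2}(r-\rho_{j-1})}+e^{-\sqrt{2}(\rho_{j+1}-r)}\bigr),
\]
where $C_j\in\{-2,0,2\}$ is the integer constant determined by the alternating sum of signs. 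Since $f$ is odd, $f((-1)^{j+1}w_j+C_j)$ matches a specific term of $\sum_l(-1)^{l+1}f(w_l)$; a first-order Taylor expansion of $f$ around the leading profile then bounds the remaining difference by $C|\delta|$, while $|f(w_l)|\leq C|1-w_l^2|$ for $l\neq j$ controls the unmatched saturated terms by the same exponential decay. Summing, $|E_2(t,r)|\leq C\Phi(t,r)$ on the interval, again using $\sigma<\sqrt{2}$ and $|h_l|<1$. The two boundary intervals are treated identically, using only one neighboring layer.

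The main obstacle is the drift estimate: the naive bound $\|w'\|_\infty\cdot|\rho_j'|\sim|t|^{-1/2}$ at $r=\rho_j$ is far too large to fit into $(1+1/r)\Phi\sim e^{-\sigma\eta}\sim(\log|t|/|t|)^{\sigma/\sqrt{2}}$ when $\sigma$ is close to $\sqrt{2}$. Extracting the Toda-type near-cancellation from the ansatz for $\rho^0$ is the crucial step; once isolated, every remaining estimate reduces to routine exponential comparisons on the three intervals.
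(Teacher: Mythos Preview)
Your proof is correct and follows essentially the same route as the paper's. Both arguments split $E$ into the drift piece $E_1$ and the nonlinear piece $E_2$, both identify the crucial near-cancellation $\rho_j'+\tfrac{n-1}{\rho_j}=O(\log|t|/|t|)=O(e^{-\sqrt{2}\eta})$ coming from the ansatz for $\rho^0$, and both handle $E_2$ interval by interval via the tail estimates $|w(r-\rho_l)\mp 1|\le Ce^{-\sqrt{2}|r-\rho_l|}$.

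The only organizational difference is in $E_1$: the paper bounds $\bigl|\rho_j'+\tfrac{n-1}{r}\bigr|\le C\log|t|/|t|$ directly on a core $r$-range (where $r\sim\sqrt{|t|}$) and treats the far tails by the decay of $w'$, whereas you split $\tfrac{n-1}{r}=\tfrac{n-1}{\rho_j}+\tfrac{(n-1)(\rho_j-r)}{r\rho_j}$ and estimate the two pieces globally. Your geometric remainder is absorbed because on the relevant intervals $r\rho_j\sim |t|$, so $\tfrac{1}{r\rho_j}e^{-\sigma|r-\rho_j|}\le \tfrac{C}{|t|}\ll (\log|t|/|t|)^{\sigma/\sqrt{2}}\le C\Phi$ for every $\sigma<\sqrt{2}$; the paper's route avoids introducing this second piece at the cost of a separate tail argument. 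Both are equally short.

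Two small remarks on your $E_2$ write-up: for $k$ even (the case treated) the constant $C_j$ is always $0$, not merely in $\{-2,0,2\}$; your ``$f((-1)^{j+1}w_j+C_j)$ matches a specific term'' step actually relies on $C_j=0$, so it is worth stating this. Also, on the $j=1$ interval for $r<\rho_1^0$ the one-sided weight $\Phi=e^{\sigma(r-\rho_2^0)}$ does not satisfy your interior lower bound $\Phi\ge c e^{-\sigma\eta}e^{\sigma|r-\rho_1^0|}$; one needs instead $\Phi\ge c e^{-\sigma\eta}e^{-\sigma|r-\rho_1^0|}$ there, which still suffices because the gain $e^{-(\sqrt{2}-\sigma)|r-\rho_1|}$ from $w'$ compensates. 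Your phrase ``analogous lower bounds'' covers this, but the asymmetry is worth making explicit.
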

\begin{proof}

First we note that
$$|\rho_j'(t)+\frac{n-1}{r}|\leq C\frac{\log |t|}{|t|},\quad\text{if}\;\; \frac{\rho_1^0(t)+\rho_{0}^0(t)}{2}\leq r\leq \rho_k^0+\frac{\sqrt{2}+\xs}{\sqrt{2}-\xs}\eta,$$

$$\frac{w'(r-\rho_j^0(t))}{\xF}\leq C\left(\frac{\log |t|}{|t|}\right),\quad\forall\;\; r\geq \rho_k^0+\frac{\sqrt{2}+\xs}{\sqrt{2}-\xs}\eta $$
and
$$\frac{w'(r-\rho_j(t))}{\xF}\leq C\left(\frac{\log{|t|}}{|t|}\right)^{-\frac{\xs}{\sqrt{2}}},\quad\forall r>0,$$
for some positive constant independent of $t,$ $T_0$ and $r.$

Next assume that
$$\frac{\rho_j^0(t)+\rho_{j-1}^0(t)}{2}\leq r\leq \frac{\rho_j^0(t)+\rho_{j+1}^0(t)}{2},\;j=1,...,k.$$

If $i\leq j-1,$ by our assumptions on $\rho_i,$ there exists a uniform constant $C>0$ such that

$$|w(r-\rho_i(t))-1|\leq Ce^{\sqrt{2}(-r+\rho_{j-1}^0(t))}.$$
Similarly if $i\geq j+1$

$$|w(r-\rho_i(t))+1|\leq Ce^{\sqrt{2}(r-\rho_{j+1}^0(t))}.$$

We set
$$g=\sum_{i=1}^{j-1}(-1)^{i+1}\left(w(r-\rho_i)-1\right)+\sum_{i=j+1}^k(-1)^{i+1}\left(w(r-\rho_i)+1\right).$$

Then
\begin{align}\nonumber
&\left|f\left(g+(-1)^{j+1}w(r-\rho_j(t))\right)-\sum_{i=1}^{k}(-1)^{i+1}f(w_i(t,r))\right|\\ \nonumber
&\leq C\left(\sum_{i=1}^{j-1}|w(r-\rho_i)-1|+\sum_{i=j+1}^k(-1)^{i+1}|w(r-\rho_i)+1|\right).
\end{align}
Combining all above and using the properties of $\rho$ we can reach to the desired result.
\end{proof}

\setcounter{equation}{0}
\section{The linear problem}

This section is devoted to build a solution to the linear parabolic problem

\be
\psi_t=\;\psi_{rr}+\frac{n-1}{r}\psi_r+f'(z(t,r))\psi+g(t,x)-\sum_{j=1}^k c_i(t)w'(r-\rho_j(t)),\qquad \mathrm{in}\;\;\;(-\infty,-T_0]\times(0,\infty).\label{fixprop}
\ee

\begin{align}
\int_{\mathbb{R}}r^{n-1}\psi(t,r)w'(r-\rho_i(t))dr&=0,\qquad\forall i=1,...,k,\ t\in (-\infty,-T_0]  \label{orthcond1}
\end{align}
for a bounded function $g,$ and  $T_0>0$ fixed sufficiently large. In this section we use the following notations
\begin{notation}
i)
$$\rho=\rho^0+h,$$
ii)
$$z(t,x)=\sum_{j=1}^{k}(-1)^{j+1}w(x-\rho_j(t))-1,$$
where $h:\mathbb{R}\mapsto\mathbb{R}^k$ is a function that satisfies $$\sup_{t\leq -2}|h(t)|+\sup_{t\leq -2}\frac{|t|}{\log|t|}|h'(t)|<1.$$\label{notation}
\end{notation}

The numbers $c_i(t)$ are exactly those that make the relations above consistent,
namely, by definition for each $t<-T_0$ they solve the linear system of equations
\begin{align}\nonumber
\sum_{i=1}^kc_i(t)\int_0^\infty&w'(r-\rho_i(t))w'(r-\rho_j(t))r^{n-1}dr\\ \nonumber
&=-\int_0^\infty\psi_{r}w''(r-\rho_j(t)) r^{n-1}dr+\int_0^\infty f'(z(t,r))\psi w'(r-\rho_j(t)) r^{n-1}dr\\ \nonumber
&-\rho'_j(t)\int_0^\infty\psi(t,r)w''(r-\rho_j(t))r^{n-1}dr\\
&+\int_0^\infty g(t,r)w'(r-\rho_j(t))r^{n-1}dr,\qquad\forall i=1,...,k,\;t<-T.
\end{align}
This system can indeed be solved uniquely since if $T_0$ is taken sufficiently large, the matrix  with coefficients $\int_{\mathbb{R}}r^{n-1}w'(r-\rho_i(t))w'(r-\rho_j(t))dr$
is nearly diagonal.

Our purpose is to build a linear operator $\psi = A(g)$ that defines a solution of \eqref{fixprop}-\eqref{orthcond1} which is bounded for norm suitably adapted to our setting.

\medskip

Let $\mathcal{C}_\xF((s,t)\times(0,\infty)$ is the space of continuous functions
with norm
\be
||u||_{\mathcal{C}_\xF((s,t)\times(0,\infty))}=\left|\left|\frac{u}{\xF}\right|\right|_{L^\infty((s,t)\times(0,\infty))},\label{norm11111}
\ee
where $\xF$ has been defined in \eqref{bound}.
\begin{prop}  \label{prop1} Let $g=g_1/r+g_2.$ There exist positive numbers $T_0$ and $C$ such that
for each $g_1, g_2\in\mathcal{C}_\xF((-\infty ,0)\times\mathbb{R}),$ there exists a solution
of Problem \eqref{fixprop}-\eqref{orthcond1}  $\psi = A(g)$ which defines
a linear operator of $g$ and satisfies the estimate
\be
||\psi||_{\mathcal{C}_\xF((-\infty,t)\times(0,\infty))}\leq C\left(||g_1||_{\mathcal{C}_\xF((-\infty,t)\times(0,\infty))}+||g_2||_{\mathcal{C}_\xF((-\infty,t)\times(0,\infty))}\right),\qquad\forall t\leq -T_0.
\label{estfix**}
\ee\label{fixth}
\end{prop}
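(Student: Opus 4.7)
I would prove Proposition~\ref{prop1} in three stages: (i) solve the projected linear problem on each finite cylinder $(-T,-T_0)\times(0,\infty)$ with zero initial data at $t=-T$ via classical linear parabolic theory; (ii) establish a uniform a priori estimate
$$\|\psi\|_{\mathcal C_\Phi((-T,-T_0)\times(0,\infty))}\le C\bigl(\|g_1\|_{\mathcal C_\Phi}+\|g_2\|_{\mathcal C_\Phi}\bigr)$$
with $C$ independent of $T$; (iii) let $T\to\infty$ and use parabolic Schauder interior estimates and a diagonal extraction to obtain the desired linear operator $\psi=A(g)$. Step (i) is routine since $f'(z)$ is bounded, the orthogonality conditions amount to projecting onto the orthogonal complement of a $k$-dimensional subspace at each time, and the author has already written down the nearly diagonal Gram system that uniquely determines the $c_i(t)$ in terms of $\psi$ and $g$. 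Step (iii) follows as soon as (ii) is available. The heart of the matter is therefore the uniform a priori estimate.

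\textbf{The a priori estimate by contradiction.} Suppose the estimate fails: there exist sequences $T_n\to\infty$, $g^{(n)}=g_1^{(n)}/r+g_2^{(n)}$ and solutions $\psi_n$ of \eqref{fixprop}--\eqref{orthcond1} on $(-T_n,-T_0]$ with $\|\psi_n\|_{\mathcal C_\Phi}=1$ and $\|g_1^{(n)}\|_{\mathcal C_\Phi}+\|g_2^{(n)}\|_{\mathcal C_\Phi}\to 0$. Choose points $(t_n,r_n)$ with $|\psi_n(t_n,r_n)|\ge \tfrac12\Phi(t_n,r_n)$. Multiplying the equation by $w'(r-\rho_j(t))\,r^{n-1}$, integrating, and using the orthogonality identity together with the near diagonality of the Gram matrix gives $|c_i^{(n)}(t)|=o(1)\,\Phi(t,\rho_i(t))$, so the Lagrange multiplier contribution enters the subsequent analysis as a negligible forcing. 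Two subcases now arise depending on the location of $r_n$.

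\textbf{Near-layer limit.} If $r_n-\rho_{j_n}(t_n)$ stays bounded for some index $j_n$, set $\tilde\psi_n(\tau,s)=\psi_n(t_n+\tau,\rho_{j_n}(t_n)+s)/\Phi(t_n,\rho_{j_n}(t_n))$. Since $r_n\sim\sqrt{-2(n-1)t_n}\to\infty$, the curvature term $\tfrac{n-1}{r}\partial_r$ vanishes in the limit; since consecutive layers are separated by $O(\log|t_n|)$, the coefficient $f'(z)$ converges to $f'(\pm w(s))$ according to parity; and the right-hand side disappears. Parabolic regularity produces a bounded nontrivial $C^{2,1}_{\mathrm{loc}}$-limit $\tilde\psi$ solving $\tilde\psi_\tau=\tilde\psi_{ss}+f'(w(s))\tilde\psi$ on $\mathbb R\times\mathbb R$ with $\int_\mathbb R \tilde\psi(\tau,s)\,w'(s)\,ds=0$ for every $\tau$. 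The nondegeneracy of $w$ (the elliptic operator $-\partial_{ss}-f'(w)$ has one-dimensional bounded kernel spanned by $w'$), together with a standard Liouville-type argument for bounded ancient solutions of the associated linear heat equation, forces $\tilde\psi\equiv 0$, contradicting $|\tilde\psi(0,0)|\ge\tfrac12$.

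\textbf{Far-from-layers case and main obstacle.} If instead $r_n$ lies a definite distance from every $\rho_j(t_n)$, then $f'(z(t_n,r_n))\le -2+o(1)$. The weight $\Phi$ is designed with exponential rate $\sigma<\sqrt 2$ precisely so that on each strip between consecutive mid-lines $r=\tfrac12(\rho_j^0+\rho_{j\pm 1}^0)$ the function $M\Phi$ is a supersolution of $\partial_t-\partial_{rr}-\tfrac{n-1}{r}\partial_r-f'(z)$ modulo a controlled defect of size $(1+1/r)\Phi$ that absorbs the source $g_1/r+g_2$. A parabolic comparison argument on such a strip, with boundary data of order $\Phi$ on the mid-lines (furnished by the near-layer analysis) and zero data at $t=-T_n$, yields $|\psi_n|\le C(\|g_1^{(n)}\|_{\mathcal C_\Phi}+\|g_2^{(n)}\|_{\mathcal C_\Phi}+o(1))\,\Phi$, contradicting $\|\psi_n\|_{\mathcal C_\Phi}=1$. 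The main obstacle is to set up this barrier cleanly for all $k$ interfaces simultaneously: one must take $T_0$ large enough so that the drift terms arising from $\rho_j'(t)$ and from $\tfrac{n-1}{r}$ (both of order $\log|t|/|t|$) are dominated by the spectral gap $2-\sigma^2>0$, and one must verify that the values of $\psi_n$ on the mid-lines inherit the correct $\Phi$-bound from the near-layer analysis. Coupling these inner and outer estimates across all $k-1$ strips, together with the decay of $c_i^{(n)}$, is the technical core of the proof.
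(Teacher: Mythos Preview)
Your three–stage strategy (solve on finite cylinders with zero initial data, prove a uniform a~priori estimate by contradiction via near–layer blow-up plus a far–field barrier, then pass to the limit $T\to\infty$) is exactly the architecture the paper uses: their Lemma~\ref{mainlem} is your step~(ii), their Lemma~\ref{cilemma*} is your step~(i), and the proof of Proposition~\ref{prop1} is your step~(iii). The treatment of the Lagrange multipliers $c_i(t)$ and the barrier on the strips between mid-lines are also the same.

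There is, however, one genuine gap in your near-layer argument. You assert that the blow-up limit $\tilde\psi$ is \emph{bounded} and then invoke a Liouville theorem. But the weight $\Phi$ is \emph{smallest} at the layer and grows like $e^{\sigma|s|}$ as one moves away from $\rho_j$ toward the neighbouring mid-lines; hence the normalization $\|\psi_n\|_{\mathcal C_\Phi}=1$ only yields $|\tilde\psi_n(\tau,s)|\le C e^{\sigma|s|}$, and the limit $\tilde\psi$ is a~priori allowed to grow at this rate. With only exponential growth, neither the energy identity nor a standard Liouville argument for $\tilde\psi_\tau=\tilde\psi_{ss}+f'(w)\tilde\psi$ on $\{w'\}^\perp$ is directly available, since $\tilde\psi(\tau,\cdot)$ need not lie in $L^2(\mathbb R)$. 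The paper handles this with an extra barrier step on the \emph{rescaled} equation (their Step~3), producing the decay $|\tilde\psi|\le Ce^{-\sigma|s|}$; only then do they pass the orthogonality to the limit (their Step~2, which also requires controlling the contributions from the other layers against the weight $r^{n-1}$) and run the energy inequality $\tfrac{d}{dt}\|\tilde\psi(\tau,\cdot)\|_{L^2}^2\le -c_0\|\tilde\psi(\tau,\cdot)\|_{L^2}^2$ to force $\tilde\psi\equiv0$ (their Step~4). You should insert this decay step before your Liouville argument.
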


The proof will be a consequence of intermediate steps that we state and prove next.
Let $g(t,r)\in \mathcal{C}_\xF\left((-\infty,-T)\times(0,\infty)\right).$ For $T>0$ and $s<-T$ we consider the Cauchy problem
\bea\nonumber
\psi_t&=&\;\psi_{rr}+\frac{n-1}{r}\psi_r+f'(z(t,r))\psi+g(t,r),\qquad \mathrm{in}\;\;\;(s,-T]\times(0,\infty),\\ \nonumber
\qquad\qquad\qquad\psi(s,r)&=&0,\phantom{\psi_{xx}+f'(z)\psi+g(t,x)}\qquad \mathrm{in}\;\;\;(0,\infty)\\
\lim_{r\rightarrow0} r^{n-1}\psi_r(t,r)&=&0,\quad\forall t\in(s,-T]\label{mainpro*}
\eea
which is uniquely solvable. We call $\psi^s(t,r)$ its solution.
\subsection{A priori estimates for the solution of the problem (\ref{mainpro*})}

 We will establish in this subsection a priori estimates for the
solutions  $\psi^s$ of (\ref{mainpro*}) that are independent on $s.$
\begin{lemma}

Let $g=g_1/r+g_2,$ $g_1,g_2\in \mathcal{C}_\xF\left((s,-T)\times(0,\infty)\right)$ and $\psi^s\in \mathcal{C}_\xF\left((s,-T)\times(0,\infty)\right)$ be a solution of the problem (\ref{mainpro*}) which satisfies the orthogonality conditions

\be
\int_0^\infty r^{n-1}\psi^s(t,r)w'(r-\rho_i(t))dr=0,\qquad\forall i=1,...,k,\;s<t<-T. \label{cond}
\ee

Then there exists a uniform constant $T_0>0$ such that for any $t\in (s,-T_0],$ the following estimate is valid
\be
||\psi^s||_{\mathcal{C}_\xF\left((s,t)\times(0,\infty)\right)}\leq C\left(||g_1||_{\mathcal{C}_\xF\left((s,t)\times(0,\infty)\right)}+||g_2||_{\mathcal{C}_\xF\left((s,t)\times(0,\infty)\right)}\right).\label{estfix}
\ee
where $C>0$ is a uniform constant.\label{mainlem}
\end{lemma}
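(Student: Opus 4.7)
My plan is to argue by contradiction. If \eqref{estfix} failed there would exist sequences $s_n \to -\infty$, $t_n \in (s_n, -T_0]$, solutions $\psi_n := \psi^{s_n}$ of \eqref{mainpro*}, and data $g_n = g_{1,n}/r + g_{2,n}$ satisfying the orthogonality \eqref{cond}, normalized so that
$$\|\psi_n\|_{\mathcal{C}_\xF((s_n,t_n)\times (0,\infty))} = 1, \qquad \|g_{1,n}\|_{\mathcal{C}_\xF} + \|g_{2,n}\|_{\mathcal{C}_\xF} \to 0.$$
I would pick $(\tau_n, r_n) \in (s_n, t_n]\times (0,\infty)$ with $|\psi_n(\tau_n, r_n)|/\xF(\tau_n, r_n) \geq 1/2$ (note $\tau_n > s_n$ since the initial datum vanishes) and analyse two cases depending on whether $(\tau_n, r_n)$ stays far from every layer $\rho_j(\tau_n)$ or clusters near one of them.

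The outer case is ruled out by a barrier argument. In the region $\{|r - \rho_j(t)|\geq M\ \forall j\}$, one has $z \to \pm 1$ so $f'(z) = 1 - 3z^2 \to -2$, while $\rho_j'(t) \to 0$ and $1/r \to 0$ as $T_0\to\infty$. Since $\sigma < \sqrt 2$, a direct computation on each of the three pieces of $\xF$ in \eqref{bound} yields
$$-\partial_t \xF + \xF_{rr} + \tfrac{n-1}{r}\xF_r + f'(z)\xF \leq -c_0\,\xF$$
with some $c_0 > 0$, once $M$ and $T_0$ are large enough. Hence $C\xF(1 + 1/r)$, with the factor $(1+1/r)$ included to absorb the $g_{1,n}/r$ part of the source, is a super-solution on the outer region. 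Comparing with the trivial initial condition at $t=s_n$ and controlling $\psi_n$ on the inner boundary $\{|r-\rho_j|=M\}$ by its global $\xF$-bound gives $|\psi_n| \leq (o(1) + o(1)/r)\xF$ there, contradicting $|\psi_n/\xF|(\tau_n, r_n) \geq 1/2$.

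In the remaining inner case, after passing to a subsequence, $r_n - \rho_{j_0}(\tau_n) \to s_*$ for some $j_0$ and some $s_* \in \R$. I would rescale
$$\tilde\psi_n(\tilde t, s) := \frac{\psi_n(\tau_n + \tilde t,\, \rho_{j_0}(\tau_n) + s)}{\xF(\tau_n, \rho_{j_0}(\tau_n))},$$
which is locally uniformly bounded because $\xF$ varies on $O(1)$ scales near $\rho_{j_0}(\tau_n)$, and which satisfies $|\tilde\psi_n(0, s_*)| \geq \tfrac12 + o(1)$. Since $\rho_{j_0}(\tau_n)\to\infty$ the drift $\frac{n-1}{r}$ and the layer-speed term $\rho_{j_0}'(\tau_n)\tilde t$ vanish in the limit, and because the neighboring layers remain distance $\eta(\tau_n)\to \infty$ away, $f'(z)$ converges on compact sets to $f'(w(s))$. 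Parabolic Schauder estimates yield a subsequential limit $\tilde\psi_\infty$ solving
$$\partial_{\tilde t}\tilde\psi_\infty = \partial_{ss}\tilde\psi_\infty + f'(w(s))\tilde\psi_\infty$$
on its limiting domain (all of $(-\infty, 0]\times\R$ if $\tau_n - s_n \to \infty$, otherwise a finite cylinder with vanishing initial datum), together with the inherited orthogonality $\int_\R \tilde\psi_\infty(\tilde t, s)\, w'(s)\,ds = 0$.

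I would close the argument by invoking the non-degeneracy of the one-dimensional linearised operator $L_0 = \partial_{ss} + f'(w)$: its $L^2(\R)$-spectrum has $0$ as the simple top eigenvalue with eigenfunction $w'$, and all the remaining spectrum lies in $(-\infty,0)$. Decomposing $\tilde\psi_\infty$ spectrally, the orthogonality annihilates the kernel mode, while each negative eigenmode grows exponentially backwards in time, forcing all coefficients to vanish in the ancient case (and uniqueness with vanishing initial data does so in the finite-cylinder case). Thus $\tilde\psi_\infty \equiv 0$, contradicting $|\tilde\psi_\infty(0,s_*)|\geq 1/2$. The \emph{main obstacle} I expect is executing the outer barrier argument uniformly across all regimes of $\xF$ in \eqref{bound}, in particular the crossover zones $r\approx(\rho_j+\rho_{j+1})/2$ where the expression for $\xF$ switches, and the boundary cases $j_0\in\{1,k\}$—the former leading into $r=0$ with the Neumann condition $\lim_{r\to 0}r^{n-1}\psi_r=0$, the latter into $r=+\infty$ where only one exponential branch is present—while simultaneously absorbing the singular $g_{1,n}/r$ contribution through the $(1+1/r)$ factor.
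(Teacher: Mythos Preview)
Your overall strategy---contradiction, blow-up near a layer, and a Liouville-type statement for the limiting one-dimensional linearised problem---matches the paper's. However, the two ``cases'' you describe are not logically independent, and this creates a genuine gap.

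In your outer case you want to conclude $|\psi_n|\le o(1)\,\xF$ on $\{|r-\rho_j|>M\ \forall j\}$ by comparison with a supersolution of the form $C\xF$. But the comparison principle requires you to dominate $\psi_n$ on the parabolic boundary, in particular on the inner boundaries $\{|r-\rho_j|=M\}$. From the normalisation $\|\psi_n\|_{\mathcal{C}_\xF}=1$ you only have $|\psi_n|\le \xF$ there, so the best the barrier yields is $|\psi_n|\le C\xF$ on the outer region---which is no improvement and does not contradict $|\psi_n(\tau_n,r_n)|/\xF(\tau_n,r_n)\ge \tfrac12$. The paper resolves this by reversing the order: it \emph{first} proves (Assertion~1) that the inner suprema
\[
\mu_{n,j}:=\sup_{|r-\rho_j|<R}\frac{|\psi_n|}{\xF}\longrightarrow 0
\]
via the blow-up argument, and \emph{then} runs the outer barrier with boundary data $\mu_{n,j}\xF=o(1)\xF$, obtaining the global bound $\|\psi_n\|_{\mathcal{C}_\xF}\le C(\sup_j\mu_{n,j}+\|g_n\|)\to 0$, which is the contradiction. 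So your inner analysis must feed into the outer one; they cannot be treated as alternatives.

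There is a second gap in the inner step. After rescaling, the a priori bound $|\psi_n|\le \xF$ only gives
\[
|\tilde\psi_n(\tilde t,s)|\le C\,e^{\sigma|s|},
\]
since near $\rho_{j_0}$ one has $\xF(t,\rho_{j_0}+s)\sim e^{-\sigma\eta}\cosh(\sigma s)$ while the normalising factor is $\xF(\tau_n,\rho_{j_0}(\tau_n))\sim e^{-\sigma\eta(\tau_n)}$. Thus the limit $\tilde\psi_\infty$ is only known to grow at rate $e^{\sigma|s|}$, and you cannot invoke the $L^2$ spectral decomposition (or the coercivity inequality $\int|\zeta'|^2-f'(w)\zeta^2\ge c\int\zeta^2$ on $\{w'\}^\perp$) without first placing $\tilde\psi_\infty(\tilde t,\cdot)$ in $L^2(\R)$. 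The paper handles this with an additional barrier (its Step~3), using that $\|g_n\|\to 0$ to upgrade the bound to $|\tilde\psi_\infty(t,s)|\le Ce^{-\sigma|s|}$, after which the energy argument (Step~4) goes through.
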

\begin{proof}
Set
$$A_j^{(s,t)}=\left\{(\tau,r)\in(s,t)\times(0,\infty):\;\frac{\rho_j^0(\tau)+\rho_{j-1}^0(\tau)}{2}< r< \frac{\rho_j^0(\tau)+\rho_{j+1}^0(\tau)}{2}\right\},$$
with $\rho_0^0=\rho_1^0-\eta$ and $\rho_{k+1}^0=\infty,$ and
$$A_{j,R}^{(s,t)}=\left\{(\tau,x)\in(s,t)\times(0,\infty):\;|r-\rho_j^0(\tau)|<R+1\right\}.$$

We will prove \eqref{estfix} by contradiction. Let $\{s_i\},\;\{\overline{t}_i\}$ be sequences such that $s_i< \overline{t}_i\leq -T_0,$ and
$s_i\downarrow-
\infty,$ $\overline{t}_i\downarrow-\infty.$ We assume that there exists $g_{i}=g_{1,i}/r+g_2$ such that $\psi_i$  solve (\ref{mainpro*}) with $s=s_i,$ $-T=\overline{t}_i,$ $g=g_i$ and satisfies \eqref{cond}.

Finally we assume that
\be
\left|\left|\psi_i\right|\right|_{\mathcal{C}_\xF((s_i,\overline{t}_i)\times(0,\infty))}=1,\label{contr}
\ee
$$\left|\left|g_{1,i}\right|\right|_{\mathcal{C}_\xF((s_i,\overline{t}_i)\times(0,\infty))}
+\left|\left|g_{2,i}\right|\right|_{\mathcal{C}_\xF((s_i,\overline{t}_i)\times(0,\infty))}\rightarrow0,$$

First we note that we can assume
$$s_i+1<\overline{t}_i.$$
Indeed, set
$$\xf_j(t,r)=M C(g_i)e^{l (t-s_i)}\left(e^{\xs\left(-r+\rho_{j+1}^0(t)\right)}+e^{\xs\left(r-\rho_{j-1}^0(t)\right)}+e^{\xs\left(4-r-\rho_{1}^0(t)\right)}\right),$$
where
$$C(g_i)=\left|\left|g_{1,i}\right|\right|_{\mathcal{C}_\xF((s_i,\overline{t}_i)\times(0,\infty))}
+\left|\left|g_{2,i}\right|\right|_{\mathcal{C}_\xF((s_i,\overline{t}_i)\times(0,\infty))}.$$

If we choose $M,l>0$ large enough, we can use $\xf_j$ like barrier to obtain
\be |\psi_i(t,r)|\leq C e^{l (t-s_i)}\xF(t,r)\left(\left|\left|g_{1,i}\right|\right|_{\mathcal{C}_\xF((s_i,\overline{t}_i)\times(0,\infty))}
+\left|\left|g_{2,i}\right|\right|_{\mathcal{C}_\xF((s_i,\overline{t}_i)\times(0,\infty))}\right).\label{4444}\ee
Thus by above inequality we can choose $s_i+1<\overline{t}_i.$

To reach at contradiction we need the following assertion,

\textbf{Assertion 1.} \emph{Let $R>0$ then we have}
\be
\lim_{i\rightarrow\infty}\left|\left|\frac{\psi_i}{\xF}\right|\right|_{L^\infty(A_{j,R}^{(s_i,\overline{t}_i)})}=0,\;\;\;\;\;\forall j=1,...k.\label{contr2}
\ee
Let us first assume that \eqref{contr2} is valid.

Set
$$
\xm_{i,j}:=\left|\left|\frac{\psi_i}{\xF}\right|\right|_{L^\infty(A_{j,R}^{(s_i,\overline{t}_i)})}\longrightarrow_{i\rightarrow\infty}0,\;\;\;\;\;\forall j=1,...k.
$$

Let $$\frac{\rho_j^0(t)+\rho_{j-1}^0(t)}{2}\leq x\leq \frac{\rho_j^0(t)+\rho_{j+1}^0(t)}{2},\;j=1,...,k$$ with $\rho_0^0=\rho_1^0-\eta$ and $\rho_{k+1}^0=\infty.$

If $n\leq j-1,$ then we have by our assumptions on $\rho_n$

$$|w(r-\rho_n(t))-1|\leq Ce^{\sqrt{2}(-r+\rho_{n-1}(t))}\leq Ce^{-\frac{\sqrt{2}}{2}(\rho_{j}-\rho_{j-1}(t))} \leq C\left(\frac{\log|t|}{|t|}\right)^{\frac{1}{2}}.$$
Similarly if $n\geq j+1$

$$|w(r-\rho_n(t))+1|\leq2e^{\sqrt{2}(r-\rho_{n+1}(t))}\leq C\left(\frac{\log|t|}{|t|}\right)^{\frac{1}{2}}.$$

Moreover if we assume that $|r-\rho_j(t)|>R+1,$ then we have that
$$|w(r-\rho_j(t))|\geq w(R).$$
Combining all above
for any $0<\xe<\sqrt{2}$ there exists $i_0\in \mathbb{N}$ and $R>0$ such that

\be
-f'(z(r,x))\geq 2-\xe^2,\;\;\forall t\leq \overline{t}_i,\;\;x\in \mathbb{R}\setminus\cup_{j=1}^kA_{j,R}^{(s_i,t_i)}\;\;\; \text{and}\;\;\;i\geq i_0.\label{fr11}
\ee
Consider the function
\begin{align*}
\overline{\xf}_{i,j}(t,r)&=M\left(e^{\xs\left(-r+\rho_{j+1}^0(t)\right)}+e^{\xs\left(r-\rho_{j-1}^0(t)\right)}+e^{\xs\left(4\tilde{M}-r-\rho_{1}^0(t)\right)}\right)\\
&\times\left(\left|\left|g_{1,i}\right|\right|_{\mathcal{C}_\xF((s_i,\overline{t}_i)\times(0,\infty))}
+\left|\left|g_{2,i}\right|\right|_{\mathcal{C}_\xF((s_i,\overline{t}_i)\times(0,\infty))}
+\sup_{1\leq j\leq k}\xm_{i,j}\right),
\end{align*}
where $M,\tilde{M}>1$  is large enough which does not depend on $s_i, \overline{t}_i.$

First we note that
$$\max(\psi_i(t,x)-\xf_{i,j}(t,x),0)=0,\quad\forall\; (t,x)\in \overline{\cup_{j=1}^kA_{j,R}^{(s_i,\overline{t}_i)}}.$$

Now, let $\xe>0,\;\tilde{M}>1$ be such that $\frac{n-1}{\tilde{M}}+2-\xe^2>\xs^2.$ Then we can choose $i_0$ such that for any $i> i_0,$ we can use $\xf_{i,j}$ like a barrier to obtain

$$|\psi_i(t,r)|\leq |\overline{\xf}_{i,j}(t,r)|,\;\;\forall(t,r)\in ((s_i,\overline{t}_i)\times(0,\infty))\setminus\cup_{j=1}^kA_{j,R}^{(s_i,\overline{t}_i)},\;\;j=1,...k,\;\;\;i\geq i_0.$$
The above inequality implies
$$1=\left|\left|\psi_i\right|\right|_{\mathcal{C}_\xF((s_i,\overline{t}_i)\times(0,\infty))}\leq
M\left(\left|\left|g_i\right|\right|_{\mathcal{C}_\xF((s_i,\overline{t}_i)\times(0,\infty))}
+\sup_{1\leq j\leq k}\xm_{i,j}\right),$$
which is clearly a contradiction if we choose $i$ large enough.

\noindent\textbf{Proof of Assertion 1.}
We will prove Assertion 1 by contradiction in four steps.

Let us give first the contradict argument and some notations.  We assume that (\ref{contr2}) is not valid. Then there exists $j\in\{1,...,k\}$ and $\xd>0$ such that
$$\left|\left|\frac{\psi_i}{\xF}\right|\right|_{L^\infty(A_{j,R}^{(s_i,\overline{t}_i)})}>\xd>0,\;\;\forall i\in\mathbb{N}.$$
Let $(t_i,y_i)\in A_{j,R}^{(s_i,\overline{t}_i)}$ such that
\be
\left|\frac{\psi_i(t_i,y_i)}{\xF(t_i,y_i)}\right|>\xd.\label{667}
\ee

We observe here that by definition of $\xF$
\be
\xF(t_i,y_i)=e^{\xs(-y_i+\rho_{j-1}(t_i))}+e^{\xs(y_i-\rho_{j+1}(t_i))}.\label{obs1}
\ee

We set $r=x+\rho_j(t+t_i),\;\;y_i=x_i+\rho_j(t_i)$ and
$$\xf_i(t,x)=\frac{\psi_i(t+t_i,x+x_i+\rho_j(t+t_i))}{\xF(t_i,x_i+\rho_j(t_i))}.$$

Then $\xf_i$ satisfies
\bea\nonumber
(\xf_i)_t&=&\; (\xf_i)_{xx}+\frac{n-1}{x+x_i+\rho_j(t+t_i)}(\xf_i)_x\\ \nonumber
&+&\rho_j'(t+t_i)(\xf_i)_x+f'(z(t+t_i,x+x_i+\rho_j(t+t_i)))\xf_i\\ \nonumber&+&\frac{g_i(t+t_i,x+x_i+\rho_j(t+t_i))}{\xF(t_i,x_i+\rho_j(t_i))}, \quad \mathrm{in}\;\;\;\xG^{(s_i,t_i)}_j,\\
\xf_i(s_i-t_i,x)&=&0,\phantom{\psi_{xx}+f'(z)\psi+g(t,x)}\qquad \mathrm{in}\;\;\;(-x_i-\rho_j(t+t_i),\infty),\label{eq11}
\eea
where $$\xG^{(s_i,t_i)}_j=\{(t,x)\in(s_i,t_i]\times\mathbb{R}:\;-x_i-\rho_j(t+t_i)<x\}.$$

Also set
\begin{align}
\nonumber
&B_{t_i,n,j}=\Bigg\{(t,x)\in(s_i-t_i,0]\times\mathbb{R}:
\;\frac{\rho_n^0(t+t_i)+\rho_{n-1}^0(t+t_i)}{2}-\rho_j(t+t_i)-x_i\\ \nonumber
&\leq x\leq
 \frac{\rho_n^0(t+t_i)+\rho_{n+1}^0(t+t_i)}{2}-\rho_j(t+t_i)-x_i\Bigg\}
\end{align}
and
\begin{align*}
B_{t_i,n,j}^M=B_{t_i,n,j}\cap\left\{(t,x)\in(s_i-t_i,0]\times\mathbb{R}: |x+\rho_j(t+t_i)+x_i-\rho_n^0(t+t_i)|>M\right\}
\end{align*}
where $n=1,....,k$ and $M>0.$
We note here that $|x_i|<R+1, \;\forall\; i\in\mathbb{N},$ $|\xf_i(0,0)|=\left|\psi_i(t_i,y_i)/\xF(t_i,y_i)\right|>\xd>0.$
Also in view of the proof of \eqref{4444} and the assumption \eqref{667} we can assume that
$$\liminf t_i-s_i>\infty.$$
Without loss of generality we assume that $x_i\rightarrow x_0\in B_{R+1}(0),$ $\lim_{i\rightarrow\infty} t_i-s_i=\infty$ (otherwise take a subsequence).

\medskip
\noindent\textbf{Step 1}

We assert that
$\xf_i\rightarrow\xf$ locally uniformly, $\xf(0,0)>\xd$  and $\xf$ satisfies
\be
\xf_t=\; \xf_{xx} +f'(w(x+x_0))\xf,\qquad \mathrm{in}\;\;\;(-\infty,0]\times\mathbb{R}.\label{equ1}
\ee

Let $(t,x)\in B_{t_i,n,j},\;1\leq n\leq k.$ By \eqref{ksij}, \eqref{contr} and \eqref{obs1} we have that
\begin{align}\nonumber
&|\xf_i(t,x)|\leq \left|\frac{\psi_i(t+t_i,x+x_i+\rho_j(t+t_i))}{\xF(t_i,x_i+\rho_j(t_i))}\right|\\ \nonumber
&\leq \left|\frac{\xF(t+t_i,x+x_i+\rho_j(t+t_i))}{\xF(t_i,x_i+\rho_j(t_i))}\right|\\ \nonumber
&\leq C_0(\xb,||h||_{L^\infty},\sup_{1\leq j\leq k}|\xg_j|,\xs,R)\\ &\times\left(\frac{|t_i|\log|t+t_i|}{|t+t_i|\log|t_i|}\right)^{\frac{\xs}{\sqrt{2}}}e^{\xs|x+\rho_j(t+t_i)-\rho_n(t+t_i)|},\quad\forall i\in\mathbb{N},\;(t,x)\in B_{t_i,n,j}.
\label{constant}
\end{align}

Now note here that $$\cup_{i=1}^\infty B_{t_i,j,j}=(-\infty,0]\times\mathbb{R}.$$

Thus the proof of the assertion of this step is complete.

\medskip
\noindent\textbf{Step 2} In this step we prove the following orthogonality condition for $\xf$.

\be
\int_{\mathbb{R}}\xf(t,x)w'(x+x_0)dx=0,\;\;\;\forall t\in (-\infty,0].\label{oth}
\ee

Let $t\in \cap_{i=i_0}^\infty(s_i-t_i,0],$ for some $i_0\in\mathbb{N}.$

\begin{align}
\nonumber
x\in &B_{t,t_i,n,j}=\Bigg\{x\in\mathbb{R}:
\;\frac{\rho_n^0(t+t_i)+\rho_{n-1}^0(t+t_i)}{2}-\rho_j(t+t_i)-x_i\\ \nonumber
&\leq x\leq
 \frac{\rho_n^0(t+t_i)+\rho_{n+1}^0(t+t_i)}{2}-\rho_j(t+t_i)-x_i\Bigg\}
\end{align}
By (\ref{constant}) we have that
\be
\left|\int_{B_{t,t_i,j,j}}x^\xa\xf_i(t,x)w'(x+x_i)dx\right|\leq C_0\int_{\mathbb{R}}r^{\xa}e^{-(\sqrt{2}-\xs)|x|}dx< C,\quad\forall \xa=0,1,...,n-1.\label{est2}
\ee

Let $\xa\in\mathbb{N}\cup\{0\}$ and $n>j.$ By \eqref{constant}, the assumptions on $\rho$ (see Notation \ref{notation}) and the fact that $|x_i|<R+1$ we have that

\begin{align}\nonumber
&\left|\int_{B_{t,t_i,n,j}}x^\xa\xf_i(t,x)w'(x+x_i)dx\right|\\ \nonumber
&\leq C_0 \int_{{\frac{\rho_n^0(t+t_i)+\rho_{n-1}^0(t+t_i)}{2}-\rho_j(t+t_i)-x_i}}^{\frac{\rho_n^0(t+t_i)+
\rho_{n+1}^0(t+t_i)}{2}-\rho_j(t+t_i)-x_i}|x|^\xa e^{-\sqrt{2}x+\xs|x+\rho_j(t+t_i)-\rho_n(t+t_i)|}dx\\
&\leq C (\log|t+t_i|)^{\xa}\left(\frac{\log|t+t_i|}{|t+t_i|}\right)^{\frac{(\sqrt{2}-\xs)}{2\sqrt{2}}}\rightarrow_{i\rightarrow\infty}0.\label{est1}
\end{align}
Similarly the estimate \eqref{est1} is valid if $n<j.$

By (\ref{est2}), (\ref{est1}) we have that
\begin{align*}
0&=\lim_{i\rightarrow\infty}\frac{1}{(\rho_j(t+t_i))^{n-1}}\int_{-x_i-\rho_j(t+t_i)}^\infty(x+\rho_j(t+t_i)+x_i)^{n-1}\xf_i(t,x)w'(x+x_i)dx\\
&=\int_{\mathbb{R}}\xf(t,x)w'(x+x_0)dx
\end{align*}
and the proof of this assertion follows.

\medskip
\noindent\textbf{Step 3} In this step we prove the following assertion:

 \emph{There exists $C=C(R,\xs)>0,$ such that}

\be
|\xf(t,x)|\leq Ce^{-\xs|x|},\;\;\forall(t,x)\in(-\infty,0]\times \mathbb{R}.\label{bound1}
\ee

Now, note that if $(t,x)\in B_{t_i,n,j},$ by definition of $\rho$ (Notation \ref{notation}), we have
$$
e^{\xs|x+\rho_j(t+t_i)-\rho_n(t+t_i)|}\leq C_0(\xb,||h||_{L^\infty},\sup_{1\leq j\leq k}|\xg_j|,\xs,R) e^{\xs|x|}.
$$
Thus, in view of the proof of \eqref{constant} we have that
$$
\left|\frac{g_i(t+t_i,x+x_i+\rho_j(t+t_i))}{\xF(t_i,x_i+\rho_j(t_i))}\right|\leq C C_1(g_i)
e^{\xs|x|},\;\; \forall x\geq-x_i-\rho_j(t+t_i)+\tilde{M},\;\forall i\in\mathbb{N}
$$
and
\begin{align*}
\left|\frac{g_i(t+t_i,x+x_i+\rho_j(t+t_i))}{\xF(t_i,x_i+\rho_j(t_i))}\right|&\leq C C_1(g_i)\frac{\left|\left|g_i\right|\right|_{\mathcal{C}_\xF((s_i,\overline{t}_i)\times\mathbb{R})}}{x+x_i+\rho_j(t+t_i)}
e^{\xs|x|}\\
&\;\;, \forall -x_i-\rho_j(t+t_i)<x\leq-x_i-\rho_j(t+t_i)+\tilde{M},\;\forall i\in\mathbb{N}
\end{align*}
where
$$C_1(g_i)=\left|\left|g_{1,i}\right|\right|_{\mathcal{C}_\xF((s_i,\overline{t}_i)\times(0,\infty))}
+\left|\left|g_{2,i}\right|\right|_{\mathcal{C}_\xF((s_i,\overline{t}_i)\times(0,\infty))}.$$
Let $\xe>0$ be such that $\xs+\xe<\sqrt{2},$ set $$G(t,x)=C(M)\left(e^{-\xs|x|}+\left|\left|g_i\right|\right|_{\mathcal{C}_\xF((s_i,\overline{t}_i)\times(0,\infty))}
\left(e^{(\xs+\xe)x}+e^{-(\xs+\xe)x}\right)\right).$$

In view of the proof of Assertion 1 we can find $i_0,$ $R,$ $\tilde{M}$ and $M>0$ such that we use $G(t,x)$ like a barrier to obtain

$$\xf_i\leq G(t,x),\quad\forall (t,x)\in \xG^{(s_i,t_i)}_j.$$

The proof of \eqref{bound1} follows if we send $i\rightarrow\infty.$

\medskip
\noindent\textbf{ Step 4}
In this step we prove the assertion \eqref{contr2}.
Consider the Hilbert space $$H=\{\xz\in H^1(\mathbb{R}):\;\int_{\mathbb{R}}\xz(x)w'(x)dx=0.\}$$
Then it is well known that the following inequality is valid
\be
\int_{\mathbb{R}}|\xz'(x)|^2-f'(w)|\xz|^2\geq c\int_{\mathbb{R}}|\xz(x)|^2dx, \qquad\forall\xf(x)\in H\cap L^2(\mathbb{R}).\label{Heq}
\ee

Thus if we multiply (\ref{equ1}) by $\xf$ and integrate with respect $x$ we have
\bea\nonumber
0&=&\frac{1}{2}\int_{\mathbb{R}}(\xf^2)_tdx+\int_{\mathbb{R}}|\xf_x|^2-f'(w(x))|\xf|^2\\ \nonumber
&\geq&\frac{1}{2}\int_{\mathbb{R}}(\xf^2)_tdx+c\int_{\mathbb{R}}|\xf(t,x)|^2dx.
\eea

Set $a(t)=\int_{\mathbb{R}}|\xf(t,x)|^2dx,$ we have that there exists a $c_0$ such that
$$a'(t)\leq-c_0a(t)\Rightarrow a(t)>a(0)e^{c_0|t|},$$
which is a contradiction since
$$
||e^{\xs|x|}\xf||_{L^\infty((-\infty,0]\times\mathbb{R})}<C.
$$

\end{proof}

\subsection{The problem \eqref{mainpro*} with $g(t,r)=h(t,r)-\sum_{j=1}^k c_i(t)w'(r-\rho_j(t))$}
In this subsection, we study the following problem.

\bea\nonumber
\psi_t&=&\;\psi_{rr}+\frac{n-1}{r}\psi_r\\ \nonumber
&+&f'(z(t,r))\psi+h(t,r)-\sum_{j=1}^k c_i(t)w'(r-\rho_j(t)),\qquad \mathrm{in}\;\;\;(s,-T]\times(0,\infty),\\ \nonumber
\qquad\qquad\qquad\psi(s,r)&=&0,\phantom{\psi_{xx}+f'(z)\psi+g(t,x)}\quad \mathrm{in}\;\;\;(0,\infty)\\
\lim_{r\rightarrow0} r^{n-1}\psi_r(t,r)&=&0,\quad\forall t\in(s,-T]\label{proci}
\eea

where $h=h_1/r+h_2,$ $h_1,h_2\in\mathcal{C}_\xF((s,-T)\times\mathbb{R})$ and $c_i(t)$ satisfies the following (nearly diagonal) system
\begin{align}\nonumber
\sum_{i=1}^kc_i(t)\int_0^\infty&w'(r-\rho_i(t))w'(r-\rho_j(t))r^{n-1}dr\\ \nonumber
&=-\int_0^\infty r^{n-1}\psi_{r}w''(r-\rho_j(t))dr+\int_0^\infty f'(z(t,r))\psi w'(r-\rho_j(t)) r^{n-1}dr\\ \nonumber
&-\rho'_j(t)\int_0^\infty\psi(t,r)\left(w'(r-\rho_j(t))r^{n-1}\right)_rdr\\
&+\int_0^\infty h(t,r)w'(r-\rho_j(t))r^{n-1}dr,\qquad\forall i=1,...,k,\;t<-T_0.\label{ci(t)}
\end{align}
We note here that if $\psi$ is a solution of \eqref{proci} and $c_i(t)$ satisfies the above system then $\psi$ satisfies the orthogonality conditions $$\int_{0}^\infty\psi(t,r)w'(r-\rho_i(t))r^{n-1}dr=0,\quad\forall i=1,...,k,\;s<t<-T_0.$$

\medskip
The main result of this subsection is the following
\begin{lemma}
Let $h=h_1/r+h_2,$ $h_1,h_2\in\mathcal{C}_\xF((s,-T)\times\mathbb{R})$. Then there exist a uniform constant $T_0\geq T>0,$ and a unique solution  $\psi^s$ of the problem \eqref{proci}.

Furthermore, we have that $\psi^s$ satisfies the orthogonality conditions (\ref{orthcond}), $\forall s<t<-T_0,$ and the following estimate
 \be
||\psi^s||_{\mathcal{C}_\xF((s,t)\times(0,\infty))}\leq C\left(||h_1||_{\mathcal{C}_\xF((s,t)\times(0,\infty))}
+||h_2||_{\mathcal{C}_\xF((s,t)\times(0,\infty))}\right).\label{estfixmef}
\ee
where $C>0$ is a uniform constant.\label{cilemma*}
\end{lemma}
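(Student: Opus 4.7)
The plan is to reduce the projected problem \eqref{proci} to an application of the a priori estimate in Lemma \ref{mainlem}, constructing $\psi^s$ by a fixed-point scheme in which the multipliers $c_i(t)$ are treated as unknowns determined by the coupling system \eqref{ci(t)}. The key observation is that once \eqref{ci(t)} is enforced, the orthogonality conditions \eqref{orthcond} follow automatically, so that Lemma \ref{mainlem} is applicable to the resulting $\psi^s$ and directly produces the estimate \eqref{estfixmef}.

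\textbf{Step 1 (invertibility of \eqref{ci(t)}).} I would first check that for $T_0$ sufficiently large the Gram-type matrix $M_{ji}(t) := \int_0^\infty r^{n-1} w'(r-\rho_i(t))\, w'(r-\rho_j(t))\,dr$ is uniformly invertible on $t\leq -T_0$: the diagonal entries are comparable to $\rho_j(t)^{n-1}\int_{\R}(w')^2\,ds$, while the off-diagonal entries decay like $e^{-\sqrt 2 |\rho_i-\rho_j|}$, and the latter is small thanks to the logarithmic spacing \eqref{forma2} coming from Notation \ref{notation}. Therefore, given any $\psi\in\mathcal{C}_\xF$ and admissible $h=h_1/r+h_2$, the system \eqref{ci(t)} uniquely determines $(c_i(t))$ linearly in $(\psi,h)$, and using the exponential localization of $w'(\cdot-\rho_j)$ paired against $\xF$ (which concentrates the integrals on the $j$-th layer), one obtains a pointwise bound $|c_i(t)|\leq C\bigl(\|\psi\|_{\mathcal{C}_\xF}+\|h_1\|_{\mathcal{C}_\xF}+\|h_2\|_{\mathcal{C}_\xF}\bigr)$ with $C$ uniform.

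\textbf{Step 2 (existence and orthogonality).} Fix $s<-T_0$. I would run a Picard iteration on $(s,-T_0)\times(0,\infty)$: given $\tilde\psi$, compute $c_i[\tilde\psi,h]$ as in Step 1 and solve the unprojected Cauchy problem \eqref{mainpro*} with source $g=h-\sum_i c_i[\tilde\psi,h]\,w'(r-\rho_i(t))$, zero initial data at $t=s$, and the Neumann-type condition at $r=0$; classical parabolic theory on bounded subintervals together with the barrier argument leading to \eqref{4444} yields a unique solution $\psi$. Contractivity of the iteration in the $\mathcal{C}_\xF$-norm follows, for $T_0$ large, from the smallness extracted in Step 1. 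For a fixed point $\psi$, I would then compute
$$\frac{d}{dt}\int_0^\infty r^{n-1}\psi(t,r)\,w'(r-\rho_j(t))\,dr$$
using the equation \eqref{proci}, one integration by parts in $r$, and the identity $w'''+f'(w)w'=0$; the resulting scalar identity is exactly \eqref{ci(t)}, so the derivative vanishes, and combined with the zero initial datum this produces \eqref{orthcond} on $(s,-T_0)$.

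\textbf{Step 3 (estimate and uniqueness).} Since the orthogonality conditions now hold, Lemma \ref{mainlem} applies to $\psi^s$ with source $g=h-\sum_i c_i w'(\cdot-\rho_i)$. The contribution of the projection terms to the $\mathcal{C}_\xF$ norm of the source is bounded by $C\bigl(\|\psi^s\|_{\mathcal{C}_\xF}+\|h_1\|_{\mathcal{C}_\xF}+\|h_2\|_{\mathcal{C}_\xF}\bigr)$ times a small factor, obtained from Step 1 together with the pointwise inequality $w'(r-\rho_j(t))\leq C\,\xF(t,r)$ (with an extra gain coming from the separation $\rho_{j+1}-\rho_j$ in \eqref{forma2}). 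Absorbing the resulting $\|\psi^s\|_{\mathcal{C}_\xF}$ contribution into the left-hand side by enlarging $T_0$ if necessary yields \eqref{estfixmef}. Uniqueness follows by applying the same estimate to the difference of two solutions. The main obstacle is precisely this absorption step: it requires quantifying the pointwise smallness of $\sum_i c_i(t)\,w'(r-\rho_i(t))$ against $\xF(t,r)$, using simultaneously the exponential localization of $w'$ and the logarithmic spacing of the $\rho_j$'s, and it is here that the smallness gained by choosing $T_0$ large is essential.
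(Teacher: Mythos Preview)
Your overall strategy matches the paper's: set up a fixed-point iteration $\psi \mapsto T^s(h - C(\psi))$, observe that the fixed point automatically satisfies the orthogonality conditions \eqref{orthcond} (either by differentiating the orthogonality integrals as you do, or directly from the defining property of \eqref{ci(t)}), and then invoke Lemma \ref{mainlem} together with the decay of the projection term (Lemma \ref{cilemma}) to absorb and obtain \eqref{estfixmef}. Steps 1 and 3 are essentially the content of Lemma \ref{cilemma} and the paper's concluding absorption argument.

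The gap is in Step 2. You run the Picard iteration directly on the full interval $(s,-T_0)$ and assert that contractivity follows ``from the smallness extracted in Step~1''. But the smallness of $C(\psi_1)-C(\psi_2)$ in $\mathcal{C}_\xF$ is not sufficient on its own: you also need a bound on the solution operator $T^s$ of the \emph{unprojected} Cauchy problem \eqref{mainpro*} in the $\mathcal{C}_\xF$ norm over $(s,-T_0)$, and without orthogonality no such bound is available uniformly in $s$. The barrier estimate \eqref{4444} you invoke only gives $\|T^s g\|_{\mathcal{C}_\xF((s,t))}\le C e^{l(t-s)}\|g\|_{\mathcal{C}_\xF}$, which blows up on long intervals; and Lemma \ref{mainlem} cannot be used during the iteration because intermediate iterates do not satisfy \eqref{cond}. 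This is not a technicality: the linearized operator has near-kernel elements $w'(\cdot-\rho_j)$, and solutions of \eqref{mainpro*} can grow along them unless the source is orthogonal.

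The paper closes this by running the contraction only on the unit interval $(s,s+1)$, where standard parabolic estimates furnish a solution operator bound with a constant independent of $s$; the smallness from Lemma \ref{cilemma} (of order $|s|^{-1/2}$) then yields contractivity. At the fixed point on $(s,s+1)$ orthogonality holds, so Lemma \ref{mainlem} now gives a uniform a priori bound, and a continuation argument extends the solution to all of $(s,-T_0]$ while preserving \eqref{estfixmef}. This short-time-then-extend structure is the mechanism that makes the estimates close, and your Step 2 needs to incorporate it (or an equivalent device) rather than attempting the fixed point globally.
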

To prove the above Lemma we need the following result

\begin{lemma}
Let $T>0$ big enough, $h=h_1/r+h_2,$ $h_1,h_2\in\mathcal{C}_\xF((s,-T)\times\mathbb{R})$ and $\psi\in \mathcal{C}_\xF((s,-T)\times(0,\infty)).$ Then there exist $c_i(t),\;i=1,...,k$ such that the nearly diagonal system \eqref{ci(t)} holds.

Furthermore the following estimates for $c_i$ are valid, for some constant $C>0$ that does not depends on $T,\;s,\;t,\;\psi,\;f$
\begin{align*}
&|c_i(t)|\leq C\left(\frac{\log|t|}{|t|}\right)^{1+\frac{\xs}{2\sqrt{2}}}\left|\left|\psi\right|\right|_{\mathcal{C}_\xF((s,-T)\times(0,\infty))}\\
&+C\left(\frac{\log|t|}{|t|}\right)^{\frac{1}{2}+\frac{\xs}{2\sqrt{2}}}\left(||h_1||_{\mathcal{C}_\xF((s,-T)\times(0,\infty))}
+||h_2||_{\mathcal{C}_\xF((s,-T)\times(0,\infty))}\right),\quad \forall\;t\in [s,-T],\;\;\;\forall\; i=1,...,k
\end{align*}
and

\begin{align*}
&\left|\frac{c_i(t)w'(r-\rho_i(t))}{\xF(t,r)}\right|\leq C\left(\frac{\log|t|}{|t|}\right)^{1-\frac{\xs}{2\sqrt{2}}}\left|\left|\psi\right|\right|_{\mathcal{C}_\xF((s,-T)\times(0,\infty))}\\
&+C\left(\frac{\log|t|}{|t|}\right)^{\frac{1}{2}-\frac{\xs}{2\sqrt{2}}}\left(||h_1||_{\mathcal{C}_\xF((s,-T)\times(0,\infty))}
+||h_2||_{\mathcal{C}_\xF((s,-T)\times(0,\infty))}\right),\quad \forall\;t\in [s,-T],\;\;\;\forall\; i=1,...,k.
\end{align*}
\label{cilemma}
\end{lemma}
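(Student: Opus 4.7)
The proof has two steps: solving the linear system (\ref{ci(t)}) for $c_i(t)$ and then estimating its solution.  For the first step, I would analyze the coefficient matrix $\mathbf{M}(t)$ with entries $M_{ij}(t)=\int_0^\infty w'(r-\rho_i(t))w'(r-\rho_j(t))r^{n-1}\,dr$.  Since $w'$ is exponentially localized near the origin, the substitution $s=r-\rho_i(t)$ gives $M_{ii}(t)=\rho_i(t)^{n-1}\int_{\mathbb{R}}(w'(s))^2\,ds\cdot(1+O(\rho_i^{-1}))$.  For $i\neq j$, the decay $w'(s)\lesssim e^{-\sqrt{2}|s|}$ together with the separation $|\rho_i(t)-\rho_j(t)|\geq \eta(t)+O(1)\sim \log|t|/\sqrt{2}$ yields $|M_{ij}(t)|\leq C\rho^{n-1}e^{-\sqrt{2}\eta}\lesssim \rho^{n-1}\,\log|t|/|t|$.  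Thus $\mathbf{M}(t)$ is strictly diagonally dominant for $T$ sufficiently large, hence invertible, and the solution $c_i(t)=(\mathbf{M}^{-1}B)_i(t)$ is well defined, with $(\mathbf{M}^{-1})_{ii}\sim 1/M_{ii}$ at leading order.

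For the second step, I would bound each of the four integrals $I_1,I_2,I_3,I_4$ in the right-hand side $B_j(t)$ of (\ref{ci(t)}).  An integration by parts in $I_1=-\int\psi_r w''(r-\rho_j)r^{n-1}\,dr$ gives $I_1=\int\psi w'''(r-\rho_j)r^{n-1}\,dr+(n-1)\int\psi w''(r-\rho_j)r^{n-2}\,dr$.  In $I_2=\int f'(z)\psi w'(r-\rho_j)r^{n-1}\,dr$, I split $f'(z)=f'(w(r-\rho_j))+[f'(z)-f'(w(r-\rho_j))]$ and use the identity $f'(w)w'=-w'''$, which follows from differentiating $w''+f(w)=0$.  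The first piece contributes $-\int\psi w'''(r-\rho_j)r^{n-1}\,dr$, cancelling the leading term of $I_1$; the residual difference $f'(z)-f'(w(r-\rho_j))=-3(z-w_j)(z+w_j)$ is, on the support of $w'(r-\rho_j)$, of size $\lesssim e^{-\sqrt{2}\eta}$ since $|z(t,r)-(-1)^{j+1}w(r-\rho_j)|$ captures the tails of the adjacent layers.  The term $I_3$ carries an explicit factor $|\rho_j'(t)|\lesssim 1/\rho_j\sim |t|^{-1/2}$ from (\ref{toda1}) and the ansatz (\ref{forma4})--(\ref{fff}).  Finally, writing $I_4=\int(h_1/r)w'(r-\rho_j)r^{n-1}\,dr+\int h_2 w'(r-\rho_j)r^{n-1}\,dr$, the $h_1$ piece gains an extra factor $1/\rho_j$.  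Each integral is estimated using $|\psi|\leq \|\psi\|_{\mathcal{C}_\xF}\xF$, $|h_i|\leq \|h_i\|_{\mathcal{C}_\xF}\xF$, the local bound $\xF(t,r)\leq C\xF(t,\rho_j)e^{\sigma|r-\rho_j|}$ on the support of $w^{(m)}(\cdot-\rho_j)$, and the finiteness of $\int_{\mathbb{R}} e^{\sigma|s|}|w^{(m)}(s)|\,ds$ for $\sigma<\sqrt{2}$.  Dividing the resulting bound on $|B_j|$ by $M_{jj}\sim \rho_j^{n-1}$ then yields the stated estimate for $|c_i(t)|$.

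The second estimate follows from the first combined with the pointwise observation that, on the support of $w'(r-\rho_i(t))$, one has $\xF(t,r)\geq c\,\xF(t,\rho_i)e^{-\sigma|r-\rho_i|}$ with $\xF(t,\rho_i)\sim e^{-\sigma\eta}\sim (\log|t|/|t|)^{\sigma/\sqrt{2}}$; therefore $|w'(r-\rho_i(t))/\xF(t,r)|\leq C\,e^{\sigma\eta}\sim (\log|t|/|t|)^{-\sigma/\sqrt{2}}$, and multiplying by the bound from step (2) produces the ratio estimate.  The main technical obstacle is tracking carefully the interplay of the three distinct small scales $\xF(t,\rho_j)\sim (\log|t|/|t|)^{\sigma/\sqrt{2}}$, $1/\rho_j\sim |t|^{-1/2}$ and $e^{-\sqrt{2}\eta}\sim \log|t|/|t|$, together with the cancellation in $I_1+I_2$ coming from $f'(w)w'=-w'''$, so as to recover the precise exponents $1+\sigma/(2\sqrt{2})$ and $1/2+\sigma/(2\sqrt{2})$ of $\log|t|/|t|$ appearing in the final bounds.
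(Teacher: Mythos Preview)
Your strategy is essentially the paper's: show the Gram matrix is nearly diagonal, integrate $I_1$ by parts, use the identity $f'(w)w'=-w'''$ to cancel the leading $r^{n-1}w'''$ piece of $I_1$ against $I_2$, estimate the $(f'(z)-f'(w))w'$ remainder via the interlayer decay, bound the $h$-term directly against $\Phi$, and derive the second inequality from the first via the pointwise bound $|w'(r-\rho_i)/\Phi(t,r)|\le C(|t|/\log|t|)^{\sigma/\sqrt2}$.  On all of these points you and the paper agree.

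There is, however, one genuine gap in your treatment of the $\psi$–terms.  After the $w'''$–cancellation you are left with the lower–order piece $(n-1)\int \psi\,w''(r-\rho_j)\,r^{n-2}\,dr$ from $I_1$, and you estimate $I_3$ separately using only $|\rho_j'(t)|\lesssim 1/\rho_j\sim|t|^{-1/2}$.  With that bound, each of these two terms contributes (after dividing by $M_{jj}\sim\rho_j^{\,n-1}$) a factor of order
\[
|t|^{-1/2}\Bigl(\tfrac{\log|t|}{|t|}\Bigr)^{\sigma/\sqrt2}\,\|\psi\|_{\mathcal C_\Phi},
\]
and for $\sigma<\sqrt2$ this is \emph{strictly larger} than the claimed $(\log|t|/|t|)^{1+\sigma/(2\sqrt2)}\|\psi\|_{\mathcal C_\Phi}$.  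The paper does not treat these two pieces separately: it groups them as
\[
\int_0^\infty r^{n-1}\Bigl(\rho_j'(t)+\tfrac{n-1}{r}\Bigr)\psi\,w''(r-\rho_j)\,dr
\]
and then invokes the sharper bound $\bigl|\rho_j'(t)+\tfrac{n-1}{r}\bigr|\le C\,\tfrac{\log|t|}{|t|}$, valid for $r$ in the interface region, which comes directly from the ansatz $\rho_j\approx\sqrt{-2(n-1)t}$ (so that $\rho_j'\approx -(n-1)/\rho_j$).  It is this second cancellation---between $\rho_j'$ and $(n-1)/r$, not just the $w'''$ one---that upgrades the $|t|^{-1/2}$ scale to the $\log|t|/|t|$ scale and produces the stated exponent $1+\sigma/(2\sqrt2)$.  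You list $1/\rho_j\sim|t|^{-1/2}$ among your ``three distinct small scales'' but never explain how to get rid of it; this is the missing ingredient.
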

\begin{proof}
For $i< j,$ we have
\begin{align}\nonumber
&\int_0^{\infty}r^{n-1}w'(r-\rho_i(t))w'(r-\rho_j(t))dr=\int_{-\rho_j(t)}^{\infty}(x+\rho_j(t))^{n-1}w'(x+(\rho_j(t)-\rho_i(t)))w'(x)dx\\ \nonumber
&\sum_{l=0}^{n-1}\binom{n-1}{l}(\rho_j(t))^{n-1-l}\int_{-\rho_j(t)}^{\infty}x^{l}w'(x+(\rho_j(t)-\rho_i(t)))w'(x)dx\\ \nonumber
&\leq (\rho_j^0(t))^{n-1}(\eta(t))^{n+1}\frac{|\log|t||}{t},
\end{align}
thus the system is nearly diagonal and we can solve it for $T$ big enough.

Also we can easily prove that
\begin{align}
\int_{0}^\infty x^{l}\xF(t,x)dx\leq C\sum_{j=1}^{k}(\rho_j^0(t)+\rho_{j-1}^0(t))^l\left(\frac{\log|t|}{|t|}\right)^\frac{\xs}{2\sqrt{2}}\label{****}
\end{align}
and
\begin{align}
\int_{0}^\infty x^{l}\xF(t,x)w'(r-\rho_j(t))dx \leq C\sum_{j=1}^{k}(\rho_j^0(t)+\rho_{j-1}^0(t))^l\left(\frac{\log|t|}{|t|}\right)^{\frac{1}{2}+\frac{\xs}{2\sqrt{2}}},\label{*****}
\end{align}
where $\rho_0^0=\rho_1-\eta,\;\rho_{k+1}^0=\infty.$

By assumptions on $\rho$ we have

\begin{align}
\left|\rho_j'(t)+\frac{n-1}{r+\rho_j(t)}\right|\leq C\frac{\log|t|}{|t|},\quad\text{if}\;\; \frac{\rho_1^0(t)+\rho_{0}^0(t)}{2}\leq r\leq \rho_k^0+\frac{\sqrt{2}+\xs}{\sqrt{2}-\xs}\eta,\label{xi'}
\end{align}
thus we can show
\be
\left|\int_{0}^\infty r^{n-1}\left(\rho_j'(t)+\frac{n-1}{r}\right)\psi w''(r-\rho_j(t))dr\right|\leq C\sum_{j=1}^{k}(\rho_j^0(t)+\rho_{j-1}^0(t))^{n-1}\left(\frac{\log|t|}{|t|}\right)^{1+\frac{\xs}{2\sqrt{2}}}.\label{estci1}
\ee

Now, by \eqref{*****}, we have

\begin{align}\nonumber
&\left|\int_{-\rho_j(t)}^\infty (r+\rho_j(t))^{n-1}\left(f'(w(r))-f'(z(t,r+\rho_j(t)))\right)\psi(t,r+\rho_j(t))w'(r)dx\right|\\ \nonumber
&\leq C\left|\left|\psi\right|\right|_{\mathcal{C}_\xF((s,-T)\times(0,\infty))}\frac{\log|t|}{|t|}\int_{-\rho_j(t)}^\infty(r+\rho_j(t))^{n-1}\xF(t,r+\rho_j(t))dr\\
&\leq C\left|\left|\psi\right|\right|_{\mathcal{C}_\xF((s,-T)\times(0,\infty))}\left(\frac{\log|t|}{|t|}\right)^{1+\frac{\xs}{2\sqrt{2}}}
\sum_{j=1}^{k}(\rho_j^0(t)+\rho_{j-1}^0(t))^{n-1}.
\label{estci}
\end{align}

Using all above and by simple calculations, we can reach at the proof of the first inequality of the Lemma.

The second inequality is a consequence of the fact that

\begin{align}\nonumber
\left|\frac{c_i(t)w'(x-\rho_i(t))}{\xF(t,x)}\right|\leq C\left(\frac{|t|}{\log|t|}\right)^{\frac{\xs}{\sqrt{2}}}.
\end{align}
The proof of Lemma is complete.
\end{proof}
\emph{Proof of Lemma \ref{cilemma*}.}

We will prove that there exists a unique solution of the problem (\ref{proci}) by using a fix point argument.

Let $$X^s=\{\psi:\;||\psi||_{\mathcal{C}_\xF((s,s+1)\times(0,\infty)}<\infty\}$$

We consider the operator $A^s: X^s\rightarrow X^s$ given by
$$A^s(\psi)=T^s(h-C(\psi)),$$
where $T^s(g)$ denotes the solution to (\ref{mainpro*}) and $C(\psi)=\sum_{j=1}^k c_i(t)w'(x-\rho_j(t)).$
Also by standard parabolic estimates we have
\be
||A^s(\psi)||_{\mathcal{C}_\xF((s,s+1)\times(0,\infty))}\leq C_0\left(
||h_2-C(\psi)||_{\mathcal{C}_\xF((s,s+1)\times(0,\infty))}+||h_1||_{\mathcal{C}_\xF((s,s+1)\times(0,\infty))}\right),\label{lem1}
\ee
for some uniform constant $C_0>0.$
We will show that the map $A^s$ defines a contraction mappping and we will apply the fixed point theorem to it.
To this end, set
$$c=C_0\left(||h_1||_{\mathcal{C}_\xF((s,-T)\times(0,\infty))}+||h_2||_{\mathcal{C}_\xF((s,-T)\times(0,\infty))}\right)$$
and
$$X^s_c=\{\psi:\;||\psi||_{C_\xF((s,s+1)\times(0,\infty))}<2c\},$$
where constant $C_0$ taken from  \eqref{lem1}, for $C(T,s)=C(s+1,s).$ We note here that by standard parabolic theory, the constant $C(T,s)= C_0|(-T-s)|.$

We claim that $A^s(X^s_c)\subset X^s_c,$ indeed by inequality \eqref{lem1} we have
\bea
\nonumber
&&||A^s(\psi)||_{\mathcal{C}_\xF((s,s+1)\times(0,\infty))}\leq C_0\left(
||h_2-C(\psi)||_{\mathcal{C}_\xF((s,s+1)\times(0,\infty))}+||h_1||_{\mathcal{C}_\xF((s,s+1)\times(0,\infty))}\right)\\ \nonumber
&\leq& C_0\left(||h_1||_{\mathcal{C}_\xF((s,-T)\times(0,\infty))}+||h_2||_{\mathcal{C}_\xF((s,-T)\times(0,\infty))}+
||C(\psi)||_{\mathcal{C}_\xF((s,s+1)\times(0,\infty))}\right)\\ \nonumber
&\leq&
\frac{C_0}{\sqrt{|s+1|}}\left(||\psi||_{\mathcal{C}_\xF((s,s+1)\times(0,\infty))}\right)+c\\ \nonumber
&\leq& c+c,
\eea
where in the above inequalities we have used Lemma \ref{cilemma} and we have chosen $|s|$ big enough. Next we show that $A^s$ defines a contraction map. Indeed, since $C(\psi)$ is linear in $\psi$ we have
\bea\nonumber
&&||A^s(\psi_1)-A^s(\psi_2)||_{\mathcal{C}_\xF((s,s+1)\times(0,\infty))}\\ \nonumber
&\leq&||C(\psi_1)-C(\psi_2)||_{\mathcal{C}_\xF((s,s+1)\times(0,\infty))}=
||C(\psi_1-\psi_2)||_{\mathcal{C}_\xF((s,s+1)\times(0,\infty))}\\ \nonumber
&\leq&\frac{C}{\sqrt{|s+1|}}||(\psi_1-\psi_2)||_{\mathcal{C}_\xF((s,s+1)\times(0,\infty))}\\ \nonumber
&\leq&\frac{1}{2}||(\psi_1-\psi_2)||_{\mathcal{C}_\xF((s,s+1)\times(0,\infty))}.
\eea
Combining all above, we have by fixed point theorem that there exist a $\psi^s\in X^s$ so that $A^s(\psi^s)=\psi^s,$ meaning that the equation (\ref{proci}) has a solution $\psi^s,$ for $-T=s+1.$

We claim that $\psi^s(t,x)$ can be extended to a solution on $(s,-T_0]\times(0,\infty),$ still satisfies the orthogonality condition (\ref{orthcond}) and the a priori estimate. To this end, assume that our solution $\psi(t,\cdot)$ exists for $s\leq t\leq -T,$ where $T>T_0$ is the maximal time of the existence. Since $\psi^s$ satisfies the orthogonality condition (\ref{orthcond}), we have by \eqref{estfix}
$$||\psi^s||_{\mathcal{C}_\xF((s,-T)\times(0,\infty))}\leq C\left(||h_2-C(\psi)||_{\mathcal{C}_\xF((s,-T)\times(0,\infty))}+||h_1||_{\mathcal{C}_\xF((s,-T)\times(0,\infty))}\right).$$
Thus if we choose $T_0$ big enough, we have by Lemma \ref{cilemma} that
\begin{align*}
||\psi^s||_{\mathcal{C}_\xF((s,-T)\times(0,\infty))}&\leq C\left(||h_1||_{\mathcal{C}_\xF((s,-T)\times(0,\infty))}+||h_2||_{\mathcal{C}_\xF((s,-T)\times(0,\infty))}\right)\\
&\leq C\left(||h_1||_{\mathcal{C}_\xF((s,-T_0)\times(0,\infty))}+||h_2||_{\mathcal{C}_\xF((s,-T_0)\times(0,\infty))}\right)
\end{align*}
It follows that $\psi^s$ can be extended past time $-T,$ unless $T=T_0.$ Moreover, (\ref{estfixmef}) is satisfied as well and $\psi^s$ also satisfies the orthogonality condition.\hfill$\Box$

\bigskip
\noindent{ \bf Proof of Proposition \ref{prop1} }
 Take a sequence $s_j\rightarrow-\infty$ and  $\psi_j=\psi^{s_j}$ where $\psi^{s_j}$ is the function   (\ref{proci}) with $s=s_j.$ Then by (\ref{estfix}), we can find a subsequence $\{\psi_j\}$ and $\psi$ such that $\psi_j\rightarrow\psi$ locally uniformly in $(-\infty,-T_0)\times(0,\infty).$

Using (\ref{estfix}) and standard parabolic theory we have that $\psi$ is a solution of (\ref{proci}) and satisfies (\ref{estfix**}).
The proof is concluded.

\section{The nonlinear problem}
Going back to the nonlinear problem, function $\psi$  is a solution of (\ref{mainpro}) if and only if  $\psi\in C_\xF((-\infty,-T_0)\times(0,\infty))$ solves the fixed point problem
\be
  \psi= A(\psi ) \label{2.14}
\ee
where
$$
A(\psi ) := T(\overline{E}(\psi)),
$$
$A$ is the operator in Proposition \ref{fixth} and
$$\overline{E}(\psi)=E+N(\psi)-\sum_{i=1}^k c_i(t)w'(x-\rho_i(t)).$$

Let $T_0>1,$ we define
$$\xL=\left\{h\in C^1(-\infty,-T_0]:\;\sup_{t\leq -T_0}|h(t)|+\sup_{t\leq -T_0}\left(\frac{|t|}{\log|t|}|h'(t)|\right)<1\right\}$$
and
$$||h||_\xL=\sup_{t\leq -T_0}(|h(t)|)+\sup_{t\leq -T_0}\left(\frac{|t|}{\log|t|}|h'(t)|\right).$$

The main goal in this section is to prove the following Proposition.
\begin{prop}
Let $\xs<\sqrt{2}$ and $\xn=\frac{\sqrt{2}-\xs}{2\sqrt{2}}$. There exists number $T_0> 0,$ depending only on $\xs$ such that for any given functions $h$ in $\xL,$ there is a solution  $ \psi= 	 \Psi(h)$ of (\ref{2.14}), with respect $\rho=\rho^0+h.$ The solution $\psi$
satisfies the orthogonality conditions (2.9)-(2.10).
Moreover, the following estimate holds
\be
||\Psi(h_1)-\Psi(h_2)||_{\mathcal{C}_\xF((-\infty,-T_0)\times(0,\infty))}\leq C\left(\frac{\log T_0}{T_0}\right)^\xn||h_1-h_2||_\xL,\label{diafora}
\ee
where $C$ is a universal constant.\label{mainproposition}
\end{prop}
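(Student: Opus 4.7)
\textbf{Proof plan for Proposition \ref{mainproposition}.} The strategy is to solve the fixed point equation $\psi=A(\psi)=T(\overline{E}(\psi))$ via Banach's contraction principle on a small closed ball of $\mathcal{C}_\xF((-\infty,-T_0)\times(0,\infty))$ whose radius scales like $(\log T_0/T_0)^\xn$. Here $T$ is the linear operator from Proposition \ref{prop1} and $\overline{E}(\psi)=E+N(\psi)-\sum_{i=1}^k c_i(t)w'(\cdot-\rho_i(t))$. Throughout, $\rho=\rho^0+h$ with $h\in\xL$, so by construction of $\rho^0$ through \eqref{toda1} one has $\rho_j'(t)+(n-1)/\rho_j(t)=O(\log|t|/|t|)$ and $e^{-\sqrt{2}(\rho_{j+1}-\rho_j)}=O(\log|t|/|t|)$.

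The three terms that make up $\overline{E}$ are estimated separately. For the error $E$, I would refine Lemma \ref{remark}: writing $E=E_1/r+E_2$ and combining the pointwise bound $|E|\le C(1+1/r)\xF$ with the decay of $\rho_j'+(n-1)/\rho_j$ near each interface and the fact that $w'(r-\rho_j)/\xF\le C(\log|t|/|t|)^{-\xs/\sqrt{2}}$, one obtains
$$\|E_1\|_{\mathcal{C}_\xF((-\infty,-T_0)\times(0,\infty))}+\|E_2\|_{\mathcal{C}_\xF((-\infty,-T_0)\times(0,\infty))}\le C\left(\tfrac{\log T_0}{T_0}\right)^\xn.$$
For the nonlinearity $N(\psi)=f(z+\psi)-f(z)-f'(z)\psi$, cubicity of $f$ yields $|N(\psi)|\le C(|\psi|^2+|\psi|^3)$, and since $\xF\le C$ uniformly on $(-\infty,-T_0)\times(0,\infty)$ this gives $\|N(\psi)\|_{\mathcal{C}_\xF}\le C\|\psi\|_{\mathcal{C}_\xF}^2(1+\|\psi\|_{\mathcal{C}_\xF})$. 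Finally, Lemma \ref{cilemma} provides the direct bound $\|\sum_i c_i(t)w'(\cdot-\rho_i)\|_{\mathcal{C}_\xF}\le C(\log T_0/T_0)^\xn\|\psi\|_{\mathcal{C}_\xF}+C(\log T_0/T_0)^{1/2-\xs/(2\sqrt{2})}(\|g_1\|_{\mathcal{C}_\xF}+\|g_2\|_{\mathcal{C}_\xF})$, but with $g$ replaced by the decomposition of $E$.

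Feeding these into Proposition \ref{prop1} and choosing $R=K(\log T_0/T_0)^\xn$ with $K$ large, the bound $\|A(\psi)\|_{\mathcal{C}_\xF}\le R$ for $\|\psi\|_{\mathcal{C}_\xF}\le R$ follows provided $T_0$ is sufficiently large; the contraction property $\|A(\psi_1)-A(\psi_2)\|_{\mathcal{C}_\xF}\le \tfrac12\|\psi_1-\psi_2\|_{\mathcal{C}_\xF}$ is obtained from the same input, using the quadratic character of $N$ near $\psi=0$ together with the small prefactor in the $c_i$-term. Banach's theorem then yields a unique fixed point $\psi=\Psi(h)$ satisfying \eqref{orthcond}. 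For the Lipschitz estimate, subtracting the identities $\Psi(h_1)=T_{h_1}(\overline E_{h_1}(\Psi(h_1)))$ and $\Psi(h_2)=T_{h_2}(\overline E_{h_2}(\Psi(h_2)))$ splits the difference into three contributions: the $h$-dependence of $E$ (handled by the Taylor expansion $w(r-\rho_j^0-h_j)-w(r-\rho_j^0-\tilde h_j)=-w'(r-\rho_j^0)(h_j-\tilde h_j)+O(|h-\tilde h|^2)$ and the bound $|\rho_j'-\tilde\rho_j'|\le \|h-\tilde h\|_\xL(\log|t|/|t|)$), the $h$-dependence of $z$ in $N$ (handled by the mean value theorem together with the size of $R$), and the change of the projection $T_h$ (handled through Lemma \ref{cilemma} applied to the difference). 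Each piece is bounded by $C(\log T_0/T_0)^\xn\|h_1-h_2\|_\xL$, giving \eqref{diafora}.

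The main technical obstacle is the upgrade from the uniform bound $|E|\le C(1+1/r)\xF$ of Lemma \ref{remark} to the sharp time-decay $O((\log T_0/T_0)^\xn)$ in $\mathcal{C}_\xF$-norm: the smallness arises from a delicate balancing between the factor $\log|t|/|t|$ in $\rho'_j+(n-1)/\rho_j$ and the interaction terms, and the growth $e^{\xs\eta/2}=(|t|/\log|t|)^{\xs/(2\sqrt{2})}$ of $1/\xF$ near each interface, which produces the final exponent $1-\xs/(2\sqrt{2})=\xn+1/2$. The same balancing, carried out for the derivatives of $E$, $N$ and the projection coefficients $c_i$ with respect to $h$, is what ultimately forces the Lipschitz constant to be $O((\log T_0/T_0)^\xn)$ rather than $O(1)$.
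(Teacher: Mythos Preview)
Your plan for existence (part a) is correct and matches the paper's approach: a contraction argument on a ball of radius $O((\log T_0/T_0)^\nu)$, using the refined bound on $E$ (which indeed follows from the proof, not just the statement, of Lemma~\ref{remark}) together with the quadratic character of $N$ and the smallness in Lemma~\ref{cilemma}.

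There is, however, a genuine gap in your treatment of the Lipschitz estimate~\eqref{diafora}. You propose to subtract the two fixed-point identities and control ``the change of the projection $T_h$'' via Lemma~\ref{cilemma}. The difficulty is that $\psi^1=\Psi(h_1)$ satisfies the orthogonality conditions~\eqref{orthcond} with respect to $\rho^1=\rho^0+h_1$, while $\psi^2=\Psi(h_2)$ satisfies them with respect to $\rho^2=\rho^0+h_2$. Consequently $\psi^1-\psi^2$ satisfies \emph{no} clean orthogonality, and the a~priori estimate of Lemma~\ref{mainlem}---which is the only tool giving uniform bounds on solutions of the linear problem---cannot be invoked on the difference. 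Lemma~\ref{cilemma} only controls the size of the Lagrange multipliers $c_i$ for a \emph{fixed} $\rho$; it says nothing about the operator dependence $h\mapsto T_h$.

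The paper handles this by an explicit correction: it sets
\[
\overline{\psi}^2=\psi^2-\sum_{i=1}^k\lambda_i(t)\,w'(r-\rho_i^1(t)),
\]
where the $\lambda_i$ are determined so that $\overline{\psi}^2$ is orthogonal to the $w'(\cdot-\rho_j^1)$. Then $Y=\psi^1-\overline{\psi}^2$ does satisfy~\eqref{orthcond} with respect to $\rho^1$, and Lemma~\ref{mainlem} applies to $Y$. The crucial point is to show that the correctors $\lambda_i$ (and $\lambda_i'$) are themselves $O((\log T_0/T_0)^\nu|t|^{-\sigma/\sqrt{2}})\|h_1-h_2\|_\Lambda$, which one gets by writing $\int r^{n-1}\psi^2 w'(r-\rho_j^1)\,dr=\int r^{n-1}\psi^2\,[w'(r-\rho_j^1)-w'(r-\rho_j^2)]\,dr$ and exploiting the orthogonality of $\psi^2$ with respect to $\rho^2$. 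Without this device (or an equivalent one) your argument does not close.
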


To prove Proposition \ref{mainproposition} we need to prove some lemmas first.

Set
$$X_{T_0}=\{\psi:\;||\psi||_{\mathcal{C}_\xF((-\infty,-T_0)\times(0,\infty))}<2C_0\left(\frac{\log T_0}{T_0}\right)^\xn\},$$
for some fixed constant $C_0.$

We denote by $N(\psi,h)$ the function $N(\psi)$ in \eqref{mainlem} with respect $\psi$ and $\rho=\rho^0+h.$
 Also we denote by $z_i$ the respective function in \eqref{z} with respect $\rho=\rho_i=\rho^0+h_i,$ $i=1,2.$
\begin{lemma}
Let $h_1,\;h_2\in \xL$ and $\psi_1,\;\psi_2\in X_{T_0}.$
Then there exists a constant $C=C(C_0)$ such that
\begin{align*}
||N(\psi_1,h_1)&-N(\psi_2,h_2)||_{\mathcal{C}_\xF((-\infty,-T_0)\times(0,\infty))}\\ &
\leq C\left(\frac{\log T_0}{T_0}\right)^\xn\left(||\psi_1-\psi_2||_{\mathcal{C}_\xF((-\infty,-T_0)\times(0,\infty))}+||h_1-h_2||_{\xL}\right)
\end{align*}\label{dia1}
\end{lemma}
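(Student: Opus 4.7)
The plan is to exploit the explicit cubic structure of the nonlinearity. Since $f(u)=u-u^3$, a direct expansion gives
\begin{equation*}
N(\psi,h) \;=\; f(z(h)+\psi)-f(z(h))-f'(z(h))\psi \;=\; -3\,z(h)\,\psi^{2} - \psi^{3}.
\end{equation*}
Writing $z_i := z(h_i)$ for $i=1,2$ and inserting and subtracting intermediate terms, I would decompose
\begin{align*}
N(\psi_{1},h_{1})-N(\psi_{2},h_{2})
&= -3(z_{1}-z_{2})\,\psi_{1}^{2} \;-\; 3\,z_{2}\,(\psi_{1}+\psi_{2})(\psi_{1}-\psi_{2}) \\
&\qquad -\,(\psi_{1}^{2}+\psi_{1}\psi_{2}+\psi_{2}^{2})(\psi_{1}-\psi_{2}),
\end{align*}
which reduces the lemma to estimating each of these three pieces in the $\mathcal{C}_{\xF}$ norm.

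First I would show that $\xF(t,r)\le C$ uniformly (this is immediate from the definition \eqref{bound}, since in each of the regions of $r$ the relevant exponential is evaluated at a nonpositive argument bounded above by a negative constant, because $\rho_{j+1}^{0}-\rho_{j}^{0}\to +\infty$). Next I would control $z_{1}-z_{2}$: by the mean value theorem applied componentwise in $j$,
\begin{equation*}
|z_{1}(t,r)-z_{2}(t,r)| \;\le\; \sum_{j=1}^{k} |h_{1,j}(t)-h_{2,j}(t)|\,\bigl|w'(\xz_{j}(t,r))\bigr| \;\le\; C\,\|h_{1}-h_{2}\|_{\xL},
\end{equation*}
for some $\xz_{j}$ lying between $r-\rho_{1,j}(t)$ and $r-\rho_{2,j}(t)$, using $\|w'\|_{L^{\infty}}\le C$ and that $\sup|h_{i}|<1$.

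With these two ingredients in hand, the three terms are controlled as follows. Using $|\psi_{i}|\le 2C_{0}(\log T_{0}/T_{0})^{\xn}\,\xF$ from $\psi_{i}\in X_{T_{0}}$, the boundedness of $\xF$ and of $z_{2}$, I obtain
\begin{align*}
\left|\frac{(z_{1}-z_{2})\psi_{1}^{2}}{\xF}\right| &\le C\left(\frac{\log T_{0}}{T_{0}}\right)^{2\xn}\xF\,\|h_{1}-h_{2}\|_{\xL}, \\
\left|\frac{z_{2}(\psi_{1}+\psi_{2})(\psi_{1}-\psi_{2})}{\xF}\right| &\le C\left(\frac{\log T_{0}}{T_{0}}\right)^{\xn}\|\psi_{1}-\psi_{2}\|_{\mathcal{C}_{\xF}}, \\
\left|\frac{(\psi_{1}^{2}+\psi_{1}\psi_{2}+\psi_{2}^{2})(\psi_{1}-\psi_{2})}{\xF}\right| &\le C\left(\frac{\log T_{0}}{T_{0}}\right)^{2\xn}\xF\,\|\psi_{1}-\psi_{2}\|_{\mathcal{C}_{\xF}},
\end{align*}
and since $2\xn\ge \xn$ and $\xF$ is bounded, summing yields the required inequality.

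The only delicate point, and the one I would check first, is the uniform boundedness of $\xF$ together with the mean value estimate for $z_{1}-z_{2}$ — everything else reduces to combining the a priori bound defining $X_{T_{0}}$ with the polynomial structure of $N$. There is no genuine obstacle: once one sees that $N$ is at worst cubic and that $z$ depends smoothly on $h$, the proof is a bookkeeping exercise in the weighted norm, so I would keep the argument short and transparent.
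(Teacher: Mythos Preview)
Your argument is correct and is essentially the same as the paper's. The paper also exploits the cubic structure of $N$, first splitting via the triangle inequality as $N(\psi_1,h_1)-N(\psi_2,h_1)$ and $N(\psi_2,h_1)-N(\psi_2,h_2)$, and then estimating these by $C(\log T_0/T_0)^{\xn}|\psi_1-\psi_2|(\xF+\xF^2)$ and $C(\log T_0/T_0)^{\xn}|h_1-h_2|\,\xF^2$ respectively; your single three–term decomposition of $-3z\psi^2-\psi^3$ is just a slight reorganization of the same algebra, and both arguments implicitly use the uniform boundedness of $\xF$ that you verify explicitly.
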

\begin{proof}
First we will prove that there exists constant $C>0$ which depends only on $C_0$ such that
\be
||N(\psi_1,h_1)-N(\psi_2,h_1)||_{\mathcal{C}_\xF((-\infty,-T_0)\times(0,\infty))}\leq C\left(\frac{\log T_0}{T_0}\right)^\xn||\psi_1-\psi_2||_{\mathcal{C}_\xF((-\infty,-T_0)\times(0,\infty))}.\label{n1}
\ee
By straight forward calculation we can easily show that
$$|N(\psi_1,h_1)-N(\psi_2,h_1)|\leq C\left(\frac{\log T_0}{T_0}\right)^\xn|\psi_1-\psi_2|(\xF+\xF^2),$$
where the constant $C$ depend on $C_0$ and the proof of \eqref{n1} follows.

Now we will prove that
\be
||N(\psi_2,h_1)-N(\psi_2,h_2)||_{\mathcal{C}_\xF((-\infty,-T_0)\times(0,\infty))}\leq C\left(\frac{\log T_0}{T_0}\right)^\xn||h_1-h_2||_{\xL}.\label{n2}
\ee
where the constant $C$ depends on $C_0.$

By straightforward calculations we have
\begin{align}\nonumber
|N(\psi_2,h_1)-N(\psi_2,h_2)|&=|-(z_1+\psi_2)^3+z_1^3+3z_1^2\psi_2+(z_2+\psi_2)^3-z_2^3-3z_2^2\psi_2|\\
&\leq C\left(\frac{\log T_0}{T_0}\right)^\xn|h_1-h_2|\xF^2,\label{n2*}
\end{align}
which implies \eqref{n2}.
By \eqref{n1} and \eqref{n2} the result follows.
\end{proof}
We denote by $E(h)$ the function $E$ in \eqref{mainlem} with respect $\psi$ and $\rho=\rho^0+h.$
\begin{lemma}
Let $h_1,\;h_2\in \xL.$
Then there exists constant $C=C(C_0)$ such that
\be
||E(h_1)-E(h_2)||_{\mathcal{C}_\xF((-\infty,-T_0)\times(0,\infty))}\leq C\left(\frac{\log T_0}{T_0}\right)^\xn||h_1-h_2||_{\xL}
\ee\label{dia2}
\end{lemma}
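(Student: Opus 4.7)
The plan is to establish a Lipschitz estimate for $h\mapsto E(h)$ via the fundamental theorem of calculus along the linear interpolation $\rho^s=\rho^0+(1-s)h_1+sh_2$. Split $E=E_{\mathrm{vel}}+E_{\mathrm{int}}$, where
\[
E_{\mathrm{vel}}(\rho)=\sum_{j=1}^{k}(-1)^{j+1}w'(r-\rho_j)\bigl(\rho_j'+\tfrac{n-1}{r}\bigr),\qquad
E_{\mathrm{int}}(\rho)=f(z)-\sum_{j=1}^{k}(-1)^{j+1}f(w_j),
\]
and compute the partial derivatives
\[
\partial_{\rho_j'}E=(-1)^{j+1}w'(r-\rho_j),\quad
\partial_{\rho_j}E=-(-1)^{j+1}w''(r-\rho_j)\bigl(\rho_j'+\tfrac{n-1}{r}\bigr)+(-1)^{j}\bigl(f'(z)-f'(w_j)\bigr)w'(r-\rho_j),
\]
so that
\[
E(h_2)-E(h_1)=\sum_{j=1}^{k}\int_{0}^{1}\Bigl[(h_{2,j}-h_{1,j})\partial_{\rho_j}E+(h_{2,j}'-h_{1,j}')\partial_{\rho_j'}E\Bigr]_{\rho=\rho^s}\,ds.
\]

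Next, I would estimate each factor in the $\mathcal{C}_\xF$ norm. For the $\partial_{\rho_j'}E$ contribution, combine $|w'(r-\rho_j)|/\xF\le C(\log|t|/|t|)^{-\xs/\sqrt{2}}$ (an estimate already used in the proof of Lemma \ref{remark}) with $|h_{1,j}'-h_{2,j}'|\le\|h_1-h_2\|_{\xL}\log|t|/|t|$, producing an overall factor $(\log|t|/|t|)^{2\xn}\xF\|h_1-h_2\|_{\xL}$. The same technique applied to the first piece of $\partial_{\rho_j}E$, using the analogous bound on $|w''/\xF|$ and $|\rho_j'+\tfrac{n-1}{r}|\le C\log|t|/|t|$ on the annular regions of \eqref{bound}, gives another $(\log|t|/|t|)^{2\xn}\xF\|h_1-h_2\|_{\xL}$. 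Both are at most $(\log T_0/T_0)^{\xn}\xF\|h_1-h_2\|_{\xL}$ for $T_0$ sufficiently large.

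The main obstacle is the remaining piece of $\partial_{\rho_j}E$, namely $(f'(z)-f'(w_j))w'(r-\rho_j)$; it dictates the sharp exponent $\xn=(\sqrt{2}-\xs)/(2\sqrt{2})$. Writing $f'(u)=1-3u^2$, rewrite this as $-3(z^2-w_j^2)w'(r-\rho_j)$. The crucial algebraic cancellation (specific to $k$ even) is that
\[
z-(-1)^{j+1}w_j=\sum_{i\neq j}(-1)^{i+1}\bigl(w(r-\rho_i)-\epsilon_i\bigr),
\]
where $\epsilon_i=+1$ for $i<j$ and $\epsilon_i=-1$ for $i>j$, the constants balancing out because $k$ is even. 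Hence $z-(-1)^{j+1}w_j$ has no constant piece and is bounded by $C\bigl(e^{-\sqrt{2}(r-\rho_{j-1})}+e^{-\sqrt{2}(\rho_{j+1}-r)}\bigr)$ on the annular region of $\rho_j$. Since $z+(-1)^{j+1}w_j$ is bounded, $|z^2-w_j^2|$ enjoys the full $\sqrt{2}$ decay. Multiplying by $w'(r-\rho_j)\lesssim e^{-\sqrt{2}|r-\rho_j|}$ and comparing to $\xF$ (whose decay rate is only $\xs$) yields a gain of $e^{-(\sqrt{2}-\xs)\eta/2}=(\log|t|/|t|)^{\xn}$. Outside the central annulus of $\rho_j$, the corresponding bounds follow by the same case-by-case analysis of \eqref{bound} as in the proof of Lemma \ref{remark}.

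Combining the three estimates, integrating in $s\in[0,1]$, and using that $(\log|t|/|t|)^{\xn}\le(\log T_0/T_0)^{\xn}$ for $|t|\ge T_0$ with $T_0$ large, yields
\[
\frac{|E(h_1)-E(h_2)|}{\xF(t,r)}\le C\Bigl(\tfrac{\log T_0}{T_0}\Bigr)^{\xn}\|h_1-h_2\|_{\xL}\quad\text{for all }(t,r)\in(-\infty,-T_0]\times(0,\infty),
\]
which is the claim. The delicate step is the sharp bound on $(z^2-w_j^2)w'(r-\rho_j)$, whose optimality reflects the asymptotic spacing $\eta\sim\frac{1}{\sqrt{2}}\log(|t|/\log|t|)$ between consecutive interfaces.
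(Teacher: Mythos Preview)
Your proposal is correct and follows essentially the same route as the paper. Both arguments estimate $E(h_1)-E(h_2)$ by bounding a ``derivative times increment'': you do this via the fundamental theorem of calculus along the interpolation $\rho^s$ and explicit partial derivatives $\partial_{\rho_j}E,\ \partial_{\rho_j'}E$, while the paper subtracts directly and applies mean-value-type bounds. The key inequalities you invoke --- $|w'(r-\rho_j)|/\Phi\le C(\log|t|/|t|)^{-\sigma/\sqrt2}$ globally, $|\rho_j'+(n-1)/r|\le C\log|t|/|t|$ on the central annuli, and the refined bound $w'/\Phi\le C(\log|t|/|t|)^{\nu}$ on the leftmost region --- are exactly the ones the paper records and uses. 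Your algebraic identity $z-(-1)^{j+1}w_j=\sum_{i\neq j}(-1)^{i+1}(w_i-\epsilon_i)$ makes explicit the cancellation that the paper leaves implicit in the phrase ``in view of the proof of Lemma~\ref{remark}'', and your factorization $f'(z)-f'(w_j)=-3(z^2-w_j^2)$ is simply the specialization to the cubic nonlinearity. One minor remark: the $(n-1)/r$ factor near $r=0$ means that, strictly speaking, both your argument and the paper's should be read as a Lipschitz bound for the decomposition $E=E_1/r+E_2$ with $E_1,E_2\in\mathcal C_\Phi$, matching the form of the right-hand side allowed in Proposition~\ref{prop1}; this is a cosmetic point that neither proof spells out.
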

\begin{proof}
Set $\rho=\rho^0+h_1,$ $\xz=\rho^0+h_2.$ In view of the proof of Lemma \ref{remark} and the above inequality we have
\begin{align*}
|f(z_1(t,r))&-\sum_{j=1}^{k}(-1)^{j+1}f(w(r-\rho_j))-f(z_2(t,r))+\sum_{j=1}^{k}(-1)^{j+1}f(w(r-\xz_j))\\
&\leq C|h_1-h_2||w'(r-\rho_{j-1}^0(t))|,\;\;\text{if}\;\; \frac{\rho_j^0(t)+\rho_{j-1}^0(t)}{2}\leq r\leq \frac{\rho_j^0(t)+\rho_{j+1}^0(t)}{2},
\end{align*}
with $\rho_0^0=\rho_1^0-\eta$ and $\rho_{k+1}^0=\infty.$

By the assumptions on $\xz$ we have that there exists a positive constant $C=C(N,k,\xs)>0$ such that
$$|\xz_j'(t)+\frac{n-1}{r}|\leq C\left(\frac{\log |t|}{|t|}\right),\quad\text{if}\;\; \frac{\rho_1^0(t)+\rho_{0}^0(t)}{2}\leq r\leq \rho_k^0+\frac{\sqrt{2}+\xs}{\sqrt{2}-\xs}\eta,$$

$$\frac{w'(r-\rho_j(t))}{\xF}\leq C\left(\frac{\log |t|}{|t|}\right),\quad\forall\;\; r\geq \rho_k^0+\frac{\sqrt{2}+\xs}{\sqrt{2}-\xs}\eta, $$
$$\frac{w'(r-\rho_j(t))}{\xF}\leq C\left(\frac{\log |t|}{|t|}\right)^\xn,\quad\forall\;\; r\leq \frac{\rho_1^0(t)+\rho_{0}^0(t)}{2},\;j=1,...,k. $$
and
\begin{align*}
&\frac{1}{r}|\sum_{j=1}^{k}(-1)^{j+1}w'(r-\rho_j(t))-\sum_{j=1}^{k}(-1)^{j+1}w'(r-\xz_j(t))|\\
&\leq \frac{C}{r}|w'(r-\rho_j^0(t))|||h_1-h_2||_{\xL},\quad\forall r\leq\frac{\rho_1^0(t)+\rho_{0}^0(t)}{2}
\end{align*}
Combining all above we can reach to the desired result by simple arguments.
\end{proof}

\begin{lemma}
Let $h_1,\;h_2\in \xL,$ $\psi_1,\;\psi_2,\;\psi\in X.$  Also let $C(\psi,h,t)=(c_1(t),...,c_k(t))$ satisfy \eqref{ci(t)} with respect $\psi$ and $\rho=\rho^0+h.$ Then
\begin{align}\nonumber
|C(\psi_1,h_1,t)-C(\psi_2,h_2,t)|&\leq C\left(\frac{|\log|t|}{|t|}\right)^{1+\frac{\xs}{2\sqrt{2}}}||\psi_1-\psi_2||_{\mathcal{C}_\xF((-\infty,-T_0)\times(0,\infty))}\\ &+C\left(\frac{\log|t|}{|t|}\right)^{\xn+\frac{\xs}{\sqrt{2}}}||h_1-h_2||_{\xL},
\end{align}
for some positive constant $C_0$ which depend only on $C_0.$
\label{dia3}
\end{lemma}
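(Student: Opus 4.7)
\medskip
\noindent\textbf{Proof Plan.}
The approach is to mimic the proof of Lemma \ref{cilemma} applied to differences, making use of the Lipschitz estimates already established in Lemmas \ref{dia1} and \ref{dia2}. I split
\begin{align*}
C(\psi_1,h_1,t)-C(\psi_2,h_2,t)=\bigl[C(\psi_1,h_1,t)-C(\psi_2,h_1,t)\bigr]+\bigl[C(\psi_2,h_1,t)-C(\psi_2,h_2,t)\bigr]
\end{align*}
and estimate each bracket separately.

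For the first bracket, freeze $\rho=\rho^0+h_1$, so that the Gram matrix on the left of \eqref{ci(t)} and the functions $z$, $\rho_j'$, $w'(r-\rho_j)$, $w''(r-\rho_j)$ on the right are all unchanged. The first three integrals on the right of \eqref{ci(t)} depend linearly on $\psi$, while the $E(h_1)$-piece of the last integral cancels in the difference, leaving only $N(\psi_1,h_1)-N(\psi_2,h_1)$. One therefore obtains the first difference by invoking Lemma \ref{cilemma} with $\psi$ replaced by $\psi_1-\psi_2$ and the forcing $h$ replaced by $N(\psi_1,h_1)-N(\psi_2,h_1)$. Combining this with the Lipschitz bound $\|N(\psi_1,h_1)-N(\psi_2,h_1)\|_{\mathcal{C}_{\xF}}\leq C(\log T_0/T_0)^{\xn}\|\psi_1-\psi_2\|_{\mathcal{C}_{\xF}}$ from Lemma \ref{dia1} produces the first term of the claimed inequality.

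For the second bracket, $\psi_2$ is fixed and the dependence on $h$ enters in five places: (i) the Gram matrix $\int w'(r-\rho_i)w'(r-\rho_j)r^{n-1}dr$; (ii) the factor $f'(z)$; (iii) the coefficient $\rho_j'(t)$; (iv) the test profiles $w'(r-\rho_j)$, $w''(r-\rho_j)$ appearing in each integrand; and (v) the error term $E(h)$. Since $\|h_i\|_{\xL}<1$, a first-order Taylor expansion of $w^{(\ell)}$ around $\rho^0_j+h_{1,j}$ gives the elementary pointwise Lipschitz bounds
$$|w^{(\ell)}(r-\rho_{1,j})-w^{(\ell)}(r-\rho_{2,j})|\leq C\,|h_{1,j}(t)-h_{2,j}(t)|\,w'(r-\rho_j^0),\qquad \ell=0,1,2,$$
and similarly $|f'(z_1)-f'(z_2)|\leq C\|h_1-h_2\|_{\xL}\sum_j w'(r-\rho_j^0)$. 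Inserting these estimates, together with the decay computations \eqref{****}--\eqref{*****}, the a priori bound \eqref{xi'} on $\rho_j'$ (plus the analogous bound for $h_{1,j}'-h_{2,j}'$ coming from the very definition of $\|\cdot\|_{\xL}$), and Lemma \ref{dia2} for the $E$-difference, into each integral on the right of \eqref{ci(t)}, and then dividing by the diagonal entry of the nearly diagonal Gram matrix (of size $\sim(\rho_j^0)^{n-1}$), yields the second term of the claimed inequality with exponent $\xn+\xs/\sqrt{2}=\tfrac{1}{2}+\tfrac{\xs}{2\sqrt{2}}$, which agrees with the $h$-exponent already present in Lemma \ref{cilemma}.

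The main difficulty is the bookkeeping: $h$ enters the linear system for the $c_i(t)$ in several distinct places simultaneously, and each contribution must be tracked with the correct decay rate in $|t|$. The key structural observation that makes everything align is that on the support of each integrand the exponential decay of $w'$ combines with the a priori smallness $\|h_i\|_{\xL}<1$ to produce the same powers of $\log|t|/|t|$ appearing in the proof of Lemma \ref{cilemma}; summing the two brackets then gives the stated estimate.
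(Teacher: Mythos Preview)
Your proposal is correct and follows exactly the approach the paper indicates: the paper's own proof consists of the single sentence ``we do very similar calculations like in Lemmas \ref{cilemma}, \ref{dia1}, \ref{dia2} and we omit it,'' and your splitting into the two brackets $[C(\psi_1,h_1)-C(\psi_2,h_1)]+[C(\psi_2,h_1)-C(\psi_2,h_2)]$, together with the linearity-in-$\psi$ reduction to Lemma \ref{cilemma} for the first and the item-by-item Lipschitz bookkeeping (Gram matrix, $f'(z)$, $\rho_j'$, test profiles, $E$-term) via Lemmas \ref{dia1}--\ref{dia2} for the second, is precisely what those ``similar calculations'' amount to.
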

\begin{proof}
For the proof of Lemma, we do very similar calculations like in Lemmas \ref{cilemma}, \ref{dia1}, \ref{dia2} and we omit it.
\end{proof}
\medskip
\emph{Proof of Proposition \ref{mainproposition} }
a)
We consider the operator $A: \mathcal{C}_\xF((-\infty,-T_0)\times(0,\infty))\rightarrow \mathcal{C}_\xF((-\infty,-T_0)\times(0,\infty)),$
where $A(\psi)$ denotes the solution to (\ref{2.14}).
We will show that the map $A$ defines a contraction mapping and we will apply the fixed point theorem to it.
First we note by Lemma \ref{remark} and Theorem \ref{fixth} that
$$
||A(0)||_{\mathcal{C}_\xF((-\infty,-T_0)\times(0,\infty))}\leq  C_0\left(\frac{\log T_0}{T_0}\right)^\xn.
$$
and by Proposition \ref{fixth} and Lemma \ref{cilemma*}
\bea\nonumber
&&||A(\psi_1)-A(\psi_2)||_{\mathcal{C}_\xF((-\infty,-T_0)\times(0,\infty))}\\ \nonumber
&\leq&  C\left(\frac{\log T_0}{T_0}\right)^\xn
\left(||\psi_1-\psi_2||_{\mathcal{C}_\xF((-\infty,-T_0)\times(0,\infty))}\right)
\eea
providing
$$
||\psi_i||_{\mathcal{C}_\xF((-\infty,-T_0)\times(0,\infty))}\leq2 C_0\left(\frac{\log T_0}{T_0}\right)^\xn.
$$
Thus if we choose $T_0$ big enough we can apply the fix point Theorem in
$$X_{T_0}=\{\psi:\;||\psi||_{\mathcal{C}_\xF((-\infty,-T_0)\times(0,\infty))}<2 C_0\left(\frac{\log T_0}{T_0}\right)^\xn\},$$
to obtain that there exists $\psi$ such that $A(\psi)=\psi.$

b) For simplicity we set  $\psi^1 = 	\Psi(h_1)$ and
 $\psi^2 = 	\Psi(h_2).$ The estimate will be obtained by applying the estimate \eqref{estfix}. However, because each
 $\psi^i$ satisfies the orthogonality conditions (\ref{orthcond})  with $\rho(t) = \rho^i(t) := \rho^0(t)+ h_i(t),$ the
difference  $\psi^1-\psi^2$ doesn't satisfy an exact orthogonality condition. To overcome this technical difficulty
we will consider instead the difference $Y :=  \psi^1-\overline{\psi}^2,$ where

$$\overline{\psi}^2=\psi^2-\sum_{i=1}^k\xl_i(t)w'(x-\rho_i^1).$$
with
$$
\sum_{i=1}^k\xl_i(t)\int_{0}^\infty r^{n-1}w'(r-\rho_i^1(t))w'(r-\rho_j^1(t))dx=\int_{0}^\infty r^{n-1}\psi^2(t,r)w'(x-\rho_j^1(t)) dr,
$$
$j=1,...,k.$
Clearly, $Y$ satisfies the orthogonality conditions (\ref{orthcond}) with $\rho(t) = \rho^1(t).$ Denote by $L^i_t$ the
operator
$$
L^i_t\psi^i=\psi^i_t-\psi^i_{rr}-\frac{n-1}{r}\psi^i_r-f'(z^i(t,x))\psi^i.
$$
By Lemmas \ref{dia1}, \ref{dia2} and \ref{dia3}
and the fact that
$$\frac{w(r-\rho_j^0(t))}{\xF}\leq C|t|^{\frac{\xs}{\sqrt{2}}},\quad \forall r>0\;\text{and}\;j=1,...,k,$$
we can easily prove
\begin{align}\nonumber
||Y||_{\mathcal{C}_\xF((-\infty,-T_0)\times(0,\infty))}&\leq C \left(\frac{\log T_0}{T_0}\right)^\xn\left(||\psi_1-\psi_2||_{\mathcal{C}_\xF((-\infty,-T_0)\times(0,\infty))}+||h_1-h_2||_{\xL}\right) \\ &+C\left(\frac{\log T_0}{T_0}\right)^\xn\left(\sum_{i=1}^k\sup_{t\in(-\infty,-T_0)}|t|^{\frac{\xs}{\sqrt{2}}}|\xl_i(t)|\right).\label{pao0}
\end{align}
Now, by orthogonality conditions \eqref{orthcond} and \eqref{****}, we have
\bea\nonumber
\left|\int_{0}^\infty r^{n-1}\psi^2(t,x)w'(r-\rho_j^1(t))dr\right|&=&\left|\int_{0}^\infty r^{n-1}\psi^2(t,r)(w'(r-\rho_j^1(t))-w'(r-\rho_j^2(t)))dr\right|\\
&\leq& C \left(\frac{\log T_0}{T_0}\right)^\xn|t|^{-\frac{\xs}{\sqrt{2}}}||h_1-h_2||_{\xL}\sum_{i=1}^k(\rho_i^0)^{n-1}.\label{pao1}
\eea

Now
\begin{align}\nonumber
&\left|\frac{d\int_{0}^\infty r^{n-1}\psi^2(t,r)w'(r-\rho_j^1(t))dr}{dt}\right|\\
&=\left|\frac{d\int_{0}^\infty r^{n-1}\psi^2(t,r)(w'(r-\rho_j^1(t))-w'(r-\rho_j^2(t)))dr}{dt}\right|.\label{pao2}
\end{align}
But
\begin{align*}
&\int_{0}^\infty r^{n-1}\psi^2_t(t,r)(w'(r-\rho_j^1(t))-w'(r-\rho_j^2(t)))dr\\
&=-\int_{0}^\infty r^{n-1}\psi^2_{r}(t,x)(w''(r-\rho_j^1(t))-w''(r-\rho_j^2(t)))dr\\
&+\int_{0}^\infty r^{n-1}L^2_t\psi^2(w'(r-\rho_j^1(t))-w'(r-\rho_j^2(t)))dr\\
&+\int_{(0,\infty)}f'(z^2(t,x))\psi^2(t,x)(w'(r-\rho_j^1(t))-w'(r-\rho_j^2(t)))dr\\
&=\int_{0}^\infty r^{n-1}\psi^2(t,r)(r^{n-1}(w''(r-\rho_j^1(t))-w''(r-\rho_j^2(t))))_rdr\\
&+\int_{0}^\infty r^{n-1}L^2_t\psi^2(w'(r-\rho_j^1(t))-w'(r-\rho_j^2(t)))dr\\
&+\int_{0}^\infty r^{n-1}f'(z^2(t,r))\psi^2(t,r)(w'(r-\rho_j^1(t))-w'(r-\rho_j^2(t)))dr.
\end{align*}

By the fix point argument in a) we have that
\begin{align}\nonumber
\left|\int_{0}^\infty r^{n-1}\psi^2_t(t,r)(w'(r-\rho_j^1(t))-w'(r-\rho_j^2(t)))dr\right|\\
\leq C \left(\frac{\log T_0}{T_0}\right)^\xn|t|^{-\frac{\xs}{\sqrt{2}}}||h_1-h_2||_{\xL}\sum_{i=1}^k(\rho_i^0)^{n-1}.\label{pao3}
\end{align}
By \eqref{pao1}, \eqref{pao2}, \eqref{pao3} and definitions of $\xl_i$ we have that
$$
 |\xl_i(t)|+|\xl_i'(t)|\leq C \left(\frac{\log T_0}{T_0}\right)^\xn|t|^{-\frac{\xs}{\sqrt{2}}}||h_1-h_2||_{\xL}
$$
Combining all above we have that
$$
||Y||_{\mathcal{C}_\xF((-\infty,-T_0)\times(0,\infty))}\leq C \left(\left(\frac{\log T_0}{T_0}\right)^\xn||\psi_1-\psi_2||_{\mathcal{C}_\xF((-\infty,-T_0)\times(0,\infty))}+||h_1-h_2||_{\xL}\right)
$$
But
\begin{align*}
||\psi_1-\psi_2||_{\mathcal{C}_\xF((-\infty,-T_0)\times(0,\infty))}&\leq ||Y||_{\mathcal{C}_\xF((-\infty,-T_0)\times(0,\infty))}+C\left(\sum_{i=1}^k\sup_{t\in(-\infty,-T_0)}|t|^{\frac{\xs}{\sqrt{2}}}|\xl_i(t)|\right)\\
&\leq  C \left(\frac{\log T_0}{T_0}\right)^\xn\left(||\psi_1-\psi_2||_{\mathcal{C}_\xF((-\infty,-T_0)\times(0,\infty))}+||h_1-h_2||_{\xL}\right),
\end{align*}
and the proof of inequality \eqref{diafora} follows if we choose $T_0$ big enough.\hfill$\Box$

\setcounter{equation}{0}
\section{the choice of $\rho_i$}\label{xiint}
Let $T_0$ big enough, $\frac{\sqrt{2}}{2}<\xs<\sqrt{2}$ and $\psi\in\mathcal{C}_\xF((-\infty,-T_0)\times(0,\infty))$ be the solution of the problem (\ref{mainpro}).
We want to find $\rho_i$ such that $c_i=0$ in \eqref{ole} for any $i=1,...,k.$

We will study only the error term $E.$
Let $1<j<k,$ then we have that
\begin{align}\nonumber
&\int_0^\infty r^{n-1}\left(f(z(t,r))-\sum_{i=1}^{k}(-1)^{i+1}f(w(r-\rho_i(t)))\right)w'(r-\rho_j(t))dr\\ \nonumber
&=\int_{-\rho_j(t)}^\infty (x+\rho_j(t))^{n-1}\left(f(z(t,x+\rho_j(t)))-\sum_{i=1}^{k}(-1)^{i+1}f(w(x+\rho_j(t)-\rho_i(t)))\right)w'(x)dx.
\end{align}
For simplicity we assume that $i$ is even.
Set
\begin{align*}
g&=\sum_{i=1}^{j-2}(-1)^{i+1}\left(w(x+\rho_j(t)-\rho_{i}(t))-1\right)
\\&+\sum_{i=j+2}^{k}(-1)^{i+1}\left(w(x+\rho_j(t)-\rho_{i}(t))+1\right),
\end{align*}
$$g_1=w(x+\rho_j-\rho_{j-1})-1,$$
and
$$g_2=w(x+\rho_j-\rho_{j+1})+1$$

By straightforward calculations we have

\begin{align}\nonumber
&\int_{-\rho_j(t)}^\infty (x+\rho_j(t))^{n-1}\left(f(z(t,x-\rho_j(t)))-\sum_{i=1}^{k}(-1)^{i+1}f(w(x+\rho_j(t)-\rho_i(t)))\right)w'(x)dx\\ \nonumber
&=3\int_{-\rho_j(t)}^\infty (x+\rho_j(t))^{n-1}(g_1+g_2)(1-w^2(x))w'(x)dx+3\int_{-\rho_j(t)}^\infty g_1^2(1+w(x))w'(x)dx \\ \nonumber
&+3\int_{-\rho_j(t)}^\infty (x+\rho_j(t))^{n-1}g_2^2(w(x)-1)w'(x)dx
+\int_{-\rho_j(t)}^\infty (x+\rho_j(t))^{n-1}F_0(t,x)w'(x)dx,
\end{align}
where $$F_0(t,x)=O(g)+O(g_1g_2).$$

By a simple argument we can show

\begin{align*}
&\int_{-\rho_j(t)}^\infty (x+\rho_j(t))^{n-1}g_1(1-w^2(x))w'(x)dx\\
&=-2e^{-\sqrt{2}(\rho_j-\rho_{j-1})}\sum_{l=1}^{n-1}\binom{n-1}{l}\rho_j^{l}\int_{\mathbb{R}}x^{{n-1}-l}e^{-\sqrt{2}x}(1-w^2(x))w'(x)dx\\
&+F_2(\rho)
\end{align*}
where $F_2$ satisfies
\begin{align*}
|F_2|&\leq C\sum_{l=1}^k|\rho_j(t)-\rho_{j-1}(t)|^l e^{-\sqrt{2}(\rho_j-\rho_{j-1})}\sum_{l=1}^{n-1}\rho_j^l(t)+O(\rho_j^{n-1}e^{-\sqrt{2}\rho_j(t)}).
\end{align*}

Similarly for  $g_2,$ $j=1,...,k,$ and in view of the proof of Lemma \ref{cilemma} we can reach at the ODE, for $\rho=(\rho_1,...,\rho_k)$

\be
\rho_j'+\frac{n-1}{\rho_j}-\xb e^{-\sqrt{2}(\rho_{j+1}-\rho_j)}+\xb e^{-\sqrt{2}(\rho_{j}-\rho_{j-1})}=F_i(\rho',\rho),\qquad j=1,2,...,k,\;\;t\in(-\infty,-T_0],\label{ode*}
\ee
with $\rho_{k+1}=\infty,$ $\rho_0=-\infty$ and
\be \label{beta} \xb=\frac{6\int_{\mathbb{R}}e^\frac{2x}{\sqrt{2}}(1-w^2(x))w'(x)dx}{\int_{\mathbb{R}}(w'(x))^2dx}.\ee

 We recall here that, we assume $T_0>1$ and we denote by
$$\xL=\{h\in C^1(-\infty,-T_0]:\;\sup_{t\leq -T_0}|h(t)|+\sup_{t\leq -T_0}\frac{|t|}{\log|t|}|h'(t)|<1\}$$
and
$$||h||_\xL=\sup_{t\leq -T_0}(|h(t)|)+\sup_{t\leq -T_0}(\frac{|t|}{\log|t|}|h'(t)|).$$
We set
$$\overline{\mathbf{F}}(h',h)=\mathbf{F}(\rho',\rho),$$
where $\rho=\rho^0+h.$

Working like above and Lemmas \ref{pao1}, \ref{pao2}, \ref{pao3} and using \eqref{diafora} we have the following result.

\begin{prop}
Let $\frac{\sqrt{2}}{2}<\xs<\sqrt{2}$ and $h,\;h_1,\;h_2\in \xL.$ Then there exists a constant $C=C(\xs,n,k)>0$ such that
$$|\overline{\mathbf{F}}(h',h)|\leq \frac{C}{|t|},$$
and
$$|\overline{\mathbf{F}}(h'_1,h_2)-\overline{\mathbf{F}}(h'_1,h_2)|\leq C\left(\frac{\log|t|}{|t|}\right)^{\frac{1}{2}+\frac{\xs}{\sqrt{2}}}||h_1-h_2||_{\xL}.$$\label{remark2}
\end{prop}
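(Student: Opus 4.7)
The plan is to regard $\overline{\mathbf F}(h',h)$ as the aggregate of all error contributions that arise when one inserts $\rho=\rho^0+h$ and $\psi=\Psi(h)$ into the projection system \eqref{ole} with $c_i=0$ and isolates the principal balance $\xb e^{-\sqrt 2(\rho_{j+1}-\rho_j)}-\xb e^{-\sqrt 2(\rho_j-\rho_{j-1})}-\rho_j'-(n-1)/\rho_j$ that appears on the left of \eqref{ode*}. I would decompose
\begin{equation*}
\overline{\mathbf F}(h',h) \;=\; \mathbf F_{\mathrm{int}}(h',h) \;+\; \mathbf F_{\psi}\bigl(h,\Psi(h)\bigr),
\end{equation*}
where $\mathbf F_{\mathrm{int}}$ collects the higher-order residuals left behind by the asymptotic expansion of the interaction integrals (the $F_2$-pieces together with the $F_0=O(g)+O(g_1g_2)$ cross terms in the computation preceding the proposition), and $\mathbf F_\psi$ gathers the contributions of $\psi$ entering through the right-hand side of \eqref{ole} after solving for the $c_i(t)$.

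For the size bound I would use the key identity $\rho_{j+1}(t)-\rho_j(t)=\eta(t)+O(1)$ valid for $h\in\xL$, which in view of Lemma~\ref{lemodesimpl1} gives
\begin{equation*}
e^{-\sqrt 2(\rho_{j+1}(t)-\rho_j(t))} \;\leq\; C\,\frac{\log|t|}{|t|}.
\end{equation*}
Expanding $(x+\rho_j)^{n-1}$ in the interaction integral by the binomial formula isolates the principal $\xb$-term; each residual in $\mathbf F_{\mathrm{int}}$ is majorised either by a product of two such exponentials or by a single exponential multiplied by $\rho_j^{-1}$, hence by $C(\log|t|/|t|)^2$ or $C\log|t|/|t|^{3/2}$, both of which are $o(1/|t|)$. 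For $\mathbf F_\psi$ one applies Lemma~\ref{cilemma} with $\psi=\Psi(h)$ and uses the a priori bound $\|\Psi(h)\|_{\mathcal{C}_\xF}\leq C(\log T_0/T_0)^\xn$ of Proposition~\ref{mainproposition}, which produces a contribution of order $(\log|t|/|t|)^{1+\xn}\ll 1/|t|$. Summing these yields $|\overline{\mathbf F}(h',h)|\leq C/|t|$.

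For the Lipschitz estimate I would write $\psi_i=\Psi(h_i)$ and split
\begin{equation*}
\overline{\mathbf F}(h'_1,h_1)-\overline{\mathbf F}(h'_2,h_2) \;=\; \bigl[\mathbf F_{\mathrm{int}}(h'_1,h_1)-\mathbf F_{\mathrm{int}}(h'_2,h_2)\bigr] + \bigl[\mathbf F_\psi(h_1,\psi_1)-\mathbf F_\psi(h_2,\psi_2)\bigr].
\end{equation*}
The first bracket is handled by differentiating the explicit interaction terms in $h$ exactly as in the proof of Lemma~\ref{dia2}; each variation produces one factor $e^{-\sqrt 2(\rho_{j\pm 1}-\rho_j)}\lesssim \log|t|/|t|$, and integration against the weight $\xF$ and against $w'$ yields an additional power $(\log|t|/|t|)^{\xs/\sqrt 2}$ coming from the $\xs$-tail of $\xF$ at the transition points. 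The second bracket is controlled by combining the Lipschitz estimate \eqref{diafora} on $\Psi$ with Lemma~\ref{dia3} on $C(\psi,h,t)$, whose second inequality already exhibits the exponent $\tfrac 12+\tfrac{\xs}{\sqrt 2}$ in disguise; the $\|h_1-h_2\|_{\xL}$-dependence enters both directly and through $\|\psi_1-\psi_2\|_{\mathcal{C}_\xF}\leq C(\log T_0/T_0)^\xn\|h_1-h_2\|_{\xL}$, and the net exponent is dictated by the slower of the two pieces.

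The main obstacle is purely bookkeeping: one must verify that, after factoring out the expected exponential interaction, every integrand decays at least like $e^{-(\sqrt 2-\xs)|x|}$ so that the integrals converge and the $\xs$-weight contributes only the gain $(\log|t|/|t|)^{\xs/\sqrt 2}$ (and never a loss). Once the exponents are accounted for and the normalising factor $\int (w')^2\,dx$ is absorbed, the constant $\xb$ of \eqref{beta} cancels out of the remainder $\overline{\mathbf F}$ and both claimed inequalities follow by adding the contributions of $\mathbf F_{\mathrm{int}}$ and $\mathbf F_\psi$.
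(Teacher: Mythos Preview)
Your proposal is correct and follows essentially the same route as the paper. The paper's own argument is a single sentence---``Working like above and Lemmas \ref{dia1}, \ref{dia2}, \ref{dia3} and using \eqref{diafora}''---and you have correctly unpacked this: the decomposition $\overline{\mathbf F}=\mathbf F_{\mathrm{int}}+\mathbf F_\psi$, the interaction expansion via $g,g_1,g_2$ and the $F_2$-remainder preceding the proposition, Lemma~\ref{cilemma} for the size of the $\psi$-contributions, Lemmas~\ref{dia2} and~\ref{dia3} together with \eqref{diafora} for the Lipschitz dependence on $h$. One small refinement: the $C/|t|$ bound is actually \emph{attained} (not merely an upper envelope for $o(1/|t|)$ terms) by the $O(\rho_j^{-2})$ corrections coming from the subleading binomial terms in $(x+\rho_j)^{n-1}/\rho_j^{n-1}$ when normalising the projection integral; your list of residuals should include this piece alongside the double-exponential and $\rho_j^{-1}e^{-\sqrt 2\eta}$ terms.
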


In the rest of this section we will study the system \ref{ode*} using some ideas in \cite{del pino}.
\subsection{the choice of $\rho^0$}\label{ksi0}
\begin{lemma}
There exists a unique solution $\eta$ with $\eta(-1)=0$ of the problem
\be
\eta'+\frac{1}{2t}\eta+ e^{-\sqrt{2}\eta}=0,\quad t\in(-\infty,-1].\label{odesimp2}
\ee

Furthermore there exist $\tilde{T}_0$ and a positive constant $C=C(\tilde{T}_0)>0$ such that

\begin{align}
-\frac{1}{\sqrt{2}}\log\left(C^{-1}\frac{\log |t|}{|t|}\right)&\leq\eta(t)\leq -\frac{1}{\sqrt{2}}\log\left(C\frac{\log |t|}{|t|}\right),\quad\forall t\leq-\tilde{T}_0,
\\
0&\leq-\eta'(t)\leq C\frac{\log |t|}{|t|},\quad\forall t\leq-\tilde{T}_0.
\end{align}\label{lemodesimpl1}
\end{lemma}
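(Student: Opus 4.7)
\medskip

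\noindent\textbf{Proof proposal.} My plan has four ingredients: global existence on $(-\infty,-1]$ via an integrating factor; sub- and supersolutions of the form $\eta_c(t)=\frac{1}{\sqrt{2}}\log(|t|/(c\log|t|))$ with a free parameter $c$; a comparison principle to pinch $\eta$ between them; and a direct argument for the sign and magnitude of $\eta'$ through the substitution $U=e^{\sqrt{2}\eta}$.

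For existence, multiplying the equation by the integrating factor $\sqrt{|t|}$ and integrating from $t$ down to $-1$ (noting $\eta(-1)=0$) yields
\begin{equation*}
\sqrt{|t|}\,\eta(t)=\int_{t}^{-1}\sqrt{|s|}\,e^{-\sqrt{2}\,\eta(s)}\,ds,
\end{equation*}
which forces $\eta>0$ on $(-\infty,-1)$ and, via $e^{-\sqrt{2}\eta}\leq 1$, the crude a priori estimate $\eta(t)\leq \tfrac{2}{3}|t|$. This bound precludes finite-time blow-up and extends the local Cauchy--Lipschitz solution to all of $(-\infty,-1]$.

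For the sharp asymptotics, set $L[\eta]:=\eta'+\eta/(2t)+e^{-\sqrt{2}\eta}$. A direct computation gives
\begin{equation*}
L[\eta_c]=\frac{\log|t|}{|t|}\!\left(c-\frac{1}{2\sqrt{2}}\right)+O\!\left(\frac{1}{|t|}\right)\qquad\text{as }t\to-\infty,
\end{equation*}
so for any fixed $c^{+}<1/(2\sqrt{2})<c^{-}$ there exists $\tilde{T}_0$ with $L[\eta_{c^{+}}]\leq 0$ and $L[\eta_{c^{-}}]\geq 0$ on $t\leq-\tilde{T}_0$. In the variable $\tau=-t$ the equation reads $\eta_\tau=G(\tau,\eta)$ with $\partial_\eta G=-1/(2\tau)-\sqrt{2}\,e^{-\sqrt{2}\eta}<0$, so the standard comparison principle applies (supersolutions dominate the solution, subsolutions are dominated by it). Since $\eta(-\tilde{T}_0)$ is a fixed positive number from the existence step, choosing $c^{+}$ sufficiently small and $c^{-}$ sufficiently large secures the correct ordering at $t=-\tilde{T}_0$, and comparison then delivers $\eta_{c^{-}}(t)\leq\eta(t)\leq\eta_{c^{+}}(t)$ for all $t\leq-\tilde{T}_0$, which is the two-sided bound claimed in the lemma.

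Finally, writing $U:=e^{\sqrt{2}\eta}$ converts the ODE into $U'=-\sqrt{2}+U\log U/(2|t|)$ and the property $\eta'\leq 0$ into $h(t):=U\log U-2\sqrt{2}|t|\leq 0$. At $t=-1$, $h=-2\sqrt{2}<0$; at any hypothetical first zero $t_0<-1$ one would need $h'(t_0)\leq 0$ (since $h<0$ on $(t_0,-1]$), yet $U'(t_0)=0$ forces $h'(t_0)=U'(t_0)(\log U(t_0)+1)+2\sqrt{2}=2\sqrt{2}>0$, a contradiction. Hence $\eta'\leq 0$ throughout, and the estimate $|\eta'|\leq C\log|t|/|t|$ follows from $|\eta'|=|U'|/(\sqrt{2}\,U)$, the bound $|U'|<\sqrt{2}$ (another consequence of $h<0$), and the Step~3 lower bound on $U$. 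The principal obstacle is the barrier construction itself with the critical scaling constant $1/(2\sqrt{2})$: the naive ansatz $\frac{1}{\sqrt{2}}\log(|t|/\log|t|)$ is only asymptotically a solution, and the free parameter $c$ must be introduced precisely to generate a leading-order residual $(c-\tfrac{1}{2\sqrt{2}})\log|t|/|t|$ in $L[\eta_c]$ of definite sign; the sign of $\eta'$ cannot be read off from the asymptotic bounds on $\eta$ alone and needs the separate contradiction argument on $h$.
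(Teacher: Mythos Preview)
Your argument is correct and takes a genuinely different route from the paper's. The paper first proves monotonicity of $\eta$ by a critical-point contradiction (if $\eta'(t_0)=\eta'(t_1)=0$ with $\eta$ increasing in between, the ODE evaluated at $t_0,t_1$ yields a chain of strict inequalities that closes on itself), then bootstraps the two-sided estimate: the crude upper bound $\eta\le \frac{1}{\sqrt{2}}\log(\sqrt{2}|t|+O(1))$ comes from integrating $(e^{\sqrt{2}\eta})'\ge -\sqrt{2}$, and the sharp lower bound comes from inserting monotonicity into the integral representation to get $e^{-\sqrt{2}\eta}\le C\log|t|/|t|$; the sharp upper bound is then recovered from $\eta'\le 0$ rewritten as $e^{-\sqrt{2}\eta}\ge \eta/(2|t|)$. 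Your approach instead manufactures the sharp bounds in one stroke via the barriers $\eta_c$ and scalar comparison, and obtains monotonicity afterwards through the clean substitution $U=e^{\sqrt{2}\eta}$ and the invariant $h=U\log U-2\sqrt{2}|t|$. The barrier method is more systematic and immediately isolates the critical constant $1/(2\sqrt{2})$; the paper's iteration is more elementary (no comparison principle) but less transparent about why the $\log|t|/|t|$ scale is exactly right.

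One point to tighten: as written you fix $c^{\pm}$, then choose $\tilde T_0=\tilde T_0(c^{\pm})$ for the barrier inequalities, and only \emph{afterwards} adjust $c^{\pm}$ to secure the ordering at $t=-\tilde T_0$; this is circular. The fix is to observe from your explicit expansion of $L[\eta_c]$ that the barrier inequalities hold on $\{|t|\ge T_1\}$ with a threshold $T_1$ that is \emph{uniform} over $c^{+}\in(0,c_0^{+}]$ and $c^{-}\in[c_0^{-},\infty)$ (the $-\tfrac{1}{2\sqrt{2}}\log c$ term only helps as $c^{+}\to 0$ or $c^{-}\to\infty$). With $T_1$ fixed first, $\eta(-T_1)$ is a fixed number and you may then send $c^{+}\to 0$, $c^{-}\to\infty$ to force $\eta_{c^{-}}(-T_1)\le \eta(-T_1)\le \eta_{c^{+}}(-T_1)$.
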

\begin{proof}
By standard ODE theory we have that there exists a unique solution $\eta$ of the problem \ref{odesimp2} which satisfies

\be
\eta=\frac{1}{\sqrt{-t}}\int_{t}^{-1}\sqrt{-t}e^{-\sqrt{2}\eta(s)}ds,\quad t\leq -1,\label{odesimp3}
\ee

Note that  by \eqref{odesimp3}, $ \eta\geq0$ and $\eta$ is not bounded.

Next we claim that $\eta$ is non increasing. We will prove it by contradiction, we assume that $\eta'$ changes signs.

First we note that, since $\eta(t)>0\;\forall\;t\leq-1,$ $\eta(-1)=0$ and $\eta$ is not bounded, we can assume that there exist $t_0> t_1$ such that $\eta'(t_0)=\eta'(t_1)=0$ and
$$\eta'(t)<0,\quad \forall t\in (t_0,-1)\quad\text{and}\quad \eta'(t)>0,\quad \forall t\in (t_1,t_0).$$

But by \eqref{odesimp2}, we have that

$$\frac{1}{-2t_1}\eta(t_1)<\frac{1}{-2t_0}\eta(t_0)= e^{-\sqrt{2}\eta(t_0)}< e^{-\sqrt{2}\eta(t_1)}=\frac{1}{-2t_1}\eta(t_1),$$
which is clearly a contradiction.

Now since $\eta\geq0$ we have by \eqref{odesimp2}
\begin{align}
\eta'(t)\geq -e^{-\sqrt{2}t}\Rightarrow \left(e^{\sqrt{2}\eta}\right)'&\geq -\sqrt{2}\Rightarrow\eta(t)\leq -\frac{1}{\sqrt{2}}\log(-\sqrt{2}(t+1)),\quad\forall t\leq-1.\label{12}
\end{align}

Using the fact that $\eta$ is non increasing, \eqref{odesimp3} and \eqref{12} we have
$$e^{-\sqrt{2}\eta(t)}\frac{1}{\sqrt{-t}}\int_{t}^{-1}\sqrt{-t}ds\leq\frac{1}{\sqrt{-t}}\int_{t}^{-1}\sqrt{-t}e^{-\sqrt{2}\eta(s)}ds=\eta\leq -\frac{1}{\sqrt{2}}\log(-\sqrt{2}(t+1)),$$
which implies the existence of $C=C(\tilde{T}_0,n)>0$ such that
$$e^{-\sqrt{2}\eta(t)}\leq -C\frac{\log(-\sqrt{2}(t+1))}{t}\quad\text{and}\quad \eta(t)\geq-\frac{1}{\sqrt{2}}\log\left(C\frac{\log(-\sqrt{2}(t+1))}{-t}\right),\quad\forall t\leq-\tilde{T}_0.$$

By \eqref{12} and the above inequality we can easily obtain that there exists $C_1=C_1(\tilde{T_0},n)>0$ such that
$$\eta(t)\geq C_1\log(-t),\quad\forall t\leq-\tilde{T}_0.$$

Now, using the fact that $\eta$ is non increasing, \eqref{odesimp2} and the above inequality, we have
$$ e^{-\sqrt{2}\eta(t)}\geq C_2\log\left(\frac{\log(-t)}{-t}\right),\quad\forall t\leq-\tilde{T}_0,$$
where $C_2=C_2(\tilde{T_0},n)>0$ and the result follows.
\end{proof}

\begin{lemma}
Let $$b_l=-\frac{1}{\sqrt{2}}\log\left(\frac{1}{2\xb}(k-l)l\right),\qquad l=1,...,k-1,$$
and $$-\xg_j=\xg_{k-j+1}=\frac{1}{2}\sum_{i=j}^{k-j}b_i,\qquad\mathrm{for}\;j\leq\frac{k}{2}.$$

Then the function
$\tilde{\rho}_j^0(t)=\left(j-\frac{k+1}{2}\right)\eta+\xg_j$ is a solution of

\be
\rho_j'+\frac{1}{2t}\rho_j-\xb e^{-\sqrt{2}(\rho_{j+1}-\rho_j)}+\xb e^{-\sqrt{2}(\rho_{j}-\rho_{j-1})}=\frac{1}{2t}\xg_j,\qquad j=1,2,...,k,\;\;t\in(-\infty,-1],\label{ode}
\ee
with $\rho_{k+1}=\infty$ and $\rho_0=-\infty$  and  $\eta$ is the function in Lemma \ref{lemodesimpl1}.\label{lemodesimpl2}
\end{lemma}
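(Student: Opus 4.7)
The statement is purely algebraic once the ODE satisfied by $\eta$ (Lemma \ref{lemodesimpl1}) is invoked, so the plan is to verify \eqref{ode} by direct substitution of the ansatz $\tilde{\rho}_j^0(t)=(j-\tfrac{k+1}{2})\eta(t)+\gamma_j$. First I would compute $\tilde{\rho}_j^0{}'+\tfrac{1}{2t}\tilde{\rho}_j^0=(j-\tfrac{k+1}{2})(\eta'+\tfrac{\eta}{2t})+\tfrac{\gamma_j}{2t}$ and use \eqref{odesimp2} to replace $\eta'+\tfrac{\eta}{2t}$ by $-e^{-\sqrt{2}\eta}$. Since $\tilde{\rho}_{j+1}^0-\tilde{\rho}_j^0=\eta+(\gamma_{j+1}-\gamma_j)$ and $\tilde{\rho}_j^0-\tilde{\rho}_{j-1}^0=\eta+(\gamma_j-\gamma_{j-1})$, the two exponential terms in \eqref{ode} each factor a common $e^{-\sqrt{2}\eta}$; after moving $\tfrac{\gamma_j}{2t}$ to the right-hand side the identity \eqref{ode} reduces to the purely algebraic system
\begin{equation*}
\frac{k+1-2j}{2}=\beta\left(e^{-\sqrt{2}(\gamma_{j+1}-\gamma_j)}-e^{-\sqrt{2}(\gamma_j-\gamma_{j-1})}\right),\qquad j=1,\ldots,k,
\end{equation*}
with the boundary conventions that the terms involving $\gamma_0$ and $\gamma_{k+1}$ vanish (as inherited from $\rho_0=-\infty$ and $\rho_{k+1}=\infty$).

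Next, setting $d_j:=\gamma_{j+1}-\gamma_j$ for $j=1,\ldots,k-1$, the $j=1$ equation gives $\beta e^{-\sqrt{2}d_1}=\tfrac{k-1}{2}=\tfrac{(k-1)\cdot 1}{2}$, and a telescoping induction on $j$ then yields $\beta e^{-\sqrt{2}d_j}=\tfrac{(k-j)j}{2}$, that is $d_j=b_j$ by the very definition of $b_l$. The $j=k$ equation is then automatic, since $\sum_{j=1}^k\tfrac{k+1-2j}{2}=0$ matches the total telescoping of the right-hand side.

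It remains to check that the symmetric prescription $-\gamma_j=\gamma_{k-j+1}=\tfrac{1}{2}\sum_{i=j}^{k-j}b_i$ (for $j\le k/2$) actually produces constants with $\gamma_{j+1}-\gamma_j=b_j$. This hinges on the symmetry $b_l=b_{k-l}$, which is immediate from $b_l=-\tfrac{1}{\sqrt{2}}\log\bigl(\tfrac{1}{2\beta}(k-l)l\bigr)$. A short case split suffices: for $j<k/2$ both $\gamma_j$ and $\gamma_{j+1}$ are given by the defining formula and the difference telescopes to $\tfrac{1}{2}(b_j+b_{k-j})=b_j$; for $j=k/2$ (even $k$) the relevant sums reduce to the single term $\tfrac12 b_{k/2}$, giving $\gamma_{k/2+1}-\gamma_{k/2}=b_{k/2}$; for $j>k/2$ one applies the dual formula $\gamma_j=\tfrac{1}{2}\sum_{i=k-j+1}^{j-1}b_i$ and telescopes again to $\tfrac{1}{2}(b_j+b_{k-j})=b_j$.

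There is no genuine analytical difficulty here; the only obstacle is bookkeeping with the indices in the symmetric definition of $\gamma_j$ and handling the boundary contributions at $j=1$, $j=k$, and, for even $k$, the central index $j=k/2$. I would organize the proof as one chain of algebraic identities, invoking \eqref{odesimp2} at the single step where a derivative appears, and listing the three index cases explicitly so that the cancellations $b_l=b_{k-l}$ are transparent.
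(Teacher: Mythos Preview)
Your proposal is correct. The direct substitution you outline works exactly as you say: after invoking the ODE \eqref{odesimp2} for $\eta$, equation \eqref{ode} reduces to the algebraic system $\tfrac{k+1-2j}{2}=\beta\bigl(e^{-\sqrt{2}d_j}-e^{-\sqrt{2}d_{j-1}}\bigr)$ with $d_j=\gamma_{j+1}-\gamma_j$, whose telescoped solution $\beta e^{-\sqrt{2}d_j}=\tfrac{j(k-j)}{2}$ gives $d_j=b_j$; your case-by-case check that the symmetric prescription for $\gamma_j$ indeed yields $\gamma_{j+1}-\gamma_j=b_j$ via $b_l=b_{k-l}$ is also fine.

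The paper's proof reaches the same algebraic system but through a different route: it first introduces the change of variables $v_l=\rho_{l+1}-\rho_l$ ($l\le k-1$), $v_k=\sum_l\rho_l$, encoded by an invertible matrix $\mathbf{B}$, and rewrites the nonlinear interaction $\mathbf{R}(\rho)$ as $\mathbf{S}(\mathbf{v})=\mathbf{B}\mathbf{R}(\mathbf{B}^{-1}\mathbf{v})$. In these coordinates the system decouples into a $(k-1)$-dimensional system for $\overline{\mathbf{v}}$ (involving the tridiagonal matrix $\mathbf{C}$) and a trivial equation $v_k'=0$; the ansatz $\overline{v}_l^0=\eta+b_l$ then forces $\mathbf{C}[e^{-\sqrt{2}b_1},\ldots,e^{-\sqrt{2}b_{k-1}}]^T=\tfrac{1}{\beta}[1,\ldots,1]^T$, which is the matrix form of your telescoped identity. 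Your approach is shorter and entirely adequate for this lemma in isolation. The paper's detour, however, is not gratuitous: the variables $v$, the matrices $\mathbf{B}$, $\mathbf{C}$, and the decoupling are reused verbatim in the next subsection to linearize and solve the full perturbed system \eqref{ode*}, so the extra structure pays for itself there.
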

\begin{proof}
We set
$$R_l(\rho):=-e^{-\sqrt{2}(\rho_{j+1}-\rho_j)}+e^{-\sqrt{2}(\rho_{j}-\rho_{j-1})},$$

$$
\textbf{R}(\rho)=\left[ \begin{array}{ccc}
R_1(\rho) \\
\vdots  \\
R_k(\rho)  \end{array} \right]
$$
and
$$\mathbf{\xg}=[\xg_1,...,\xg_k]^T\quad\text{and}\quad\mathbf{b}=[b_1,...,b_{k-1}]^T.$$
We want to solve the system $\rho'+\frac{1}{2t}\rho+\xb\textbf{R}(\rho)=\frac{1}{2t}\mathbf{\xg}.$ To do so we find first a convenient representation of the operator $\textbf{R}(\rho).$ Let us consider the auxiliary variables

$$
\textbf{v}:=\left[ \begin{array}{ccc}
\mathbf{\overline{v}} \\
v_k
  \end{array} \right],
  \qquad
\mathbf{\overline{v}}= \left[ \begin{array}{ccc}
v_1 \\
\vdots  \\
v_{k-1}  \end{array} \right],
$$

defined in terms of $\rho$ as
$$v_l=\rho_{l+1}-\rho_l\;\;\;\mathrm{with}\;l=1,...,k-1,\qquad v_k=\sum_{l=1}^k\rho_l,$$
and define the operators
$$
\mathbf{S}(\textbf{v}):=\left[ \begin{array}{ccc}
\overline{\mathbf{S}}(\mathbf{\overline{v}}) \\
0
  \end{array} \right],
  \qquad
\overline{\mathbf{S}}(\mathbf{\overline{v}})= \left[ \begin{array}{ccc}
S_1(\overline{\mathbf{v}}_1) \\
\vdots  \\
S_{k-1}(\overline{\mathbf{v}}_1)  \end{array} \right],
$$
where $S_l(\overline{\mathbf{v}}):R_{l+1}(\rho)-R_l(\rho)=$
$$
\Bigg\{ \begin{array}{ccc}
2e^{-\sqrt{2}v_1}-e^{\sqrt{2}v_2} &\mathrm{if}\qquad l=1  \\
-e^{\sqrt{2}v_{l+1}}+2e^{-\sqrt{2}v_l}-e^{\sqrt{2}v_{l-1}}&\mathrm{if}\qquad 2\leq l\leq k-2  \\
2e^{-\sqrt{2}v_k}-e^{\sqrt{2}v_{k-1}} &\mathrm{if}\qquad l=k-1
  \end{array}
$$

Then the operators  $\mathbf{R}$ and $\mathbf{S}$ are in correspondence through the formula
$$\mathbf{S}(\mathbf{v})=\mathbf{B}\mathbf{R}(\mathbf{B}^{-1}\mathbf{v}),$$
where $\mathbf{B}$ is the constant, invertible $k\times k$ matrix
$$\mathbf{\mathbf{B}}=\left[ \begin{array}{ccccc}
-1 & 1 & 0&\cdots&0 \\
0 & -1 & 1&\cdots&0 \\
\vdots & \ddots & \ddots&\ddots&\vdots\\
0&\cdots&0&-1&1\\
1&\ldots& 1&1&1
\end{array} \right]$$

and then through the relation $\rho=\mathbf{B}^{-1}\mathbf{v}$ the system $\rho'+\frac{1}{2t}\rho+\xb\textbf{R}(\rho)=\frac{1}{2t}\mathbf{\xg}$ is equivalent to $\mathbf{v}'+\frac{1}{2t}\mathbf{v}+\xb\mathbf{S}(\mathbf{v})=\frac{1}{2t}\mathbf{b},$ which decouples into

\bea
\label{18}
\overline{\mathbf{v}}'+\frac{1}{2t}\overline{\mathbf{v}}+\xb\overline{\mathbf{S}}(\mathbf{\overline{v}})&=&\frac{1}{2t}\mathbf{b},\\ \nonumber
v_k'&=&0,
\eea
where
\be
\overline{\mathbf{S}}(\mathbf{\overline{v}})= \mathbf{C}\left[ \begin{array}{ccc}
e^{-\sqrt{2} v_1} \\
\vdots  \\
e^{-\sqrt{2}v_{k-1}}  \end{array} \right]
,\qquad
\mathbf{C}=\left[ \begin{array}{ccccc}
2 & -1 & 0&\cdots&0 \\
-1 & 2 & -1&\cdots&0 \\
\vdots & \ddots & \ddots&\ddots&\vdots\\
0&\cdots&-1&2&-1\\
0&\ldots& &-1&2
\end{array} \right].\label{C}
\ee
We claim now that the function
\be
\overline{v}_l^0(t)=\eta+b_l.\label{v}
\ee
is a solution of \eqref{18}.

Indeed, substituting this expression into the system we see that the following equations for the numbers $b_l$ are satisfied
$$
\mathbf{C}\left[ \begin{array}{ccc}
e^{-\sqrt{2} b_1} \\
\vdots  \\
e^{-\sqrt{2}b_{k-1}}  \end{array} \right]=\frac{1}{\xb}\left[ \begin{array}{ccc}
1 \\
\vdots  \\
1  \end{array} \right]
$$

Now we note that
$b_l=b_{k-l}$ for $l=1,..,k-1,$ thus by (\ref{ode}) we have that $$\rho_{k-j+1}=-\rho_{j},\;\;j\leq\frac{k}{2},$$
and
$$\rho_j=\frac{1}{\sqrt{2}}\left(j-\frac{k+1}{2}\right)\eta+\xg_j.$$
and the result follows

\end{proof}
\subsection{the solution of the problem \eqref{ode*}}\label{general}
We keep the notations of the previous subsection. Set $\xz=\sqrt{-2(n-1)t},$
and $\mathbf{e}=[1,...,1]^T.$

 We look for solutions of the form $\rho=\sqrt{-2(n-1)t}\mathbf{e}+\tilde{h},$
then $\tilde{h}$ satisfies

$$\tilde{h}'+\frac{1}{2t}\tilde{h}=\mathbf{F}(\tilde{h}'+\mathbf{e}\xz',\tilde{h}+\mathbf{e}\xz)+\frac{n-1}{\xz}\mathbf{e}+\frac{1}{2t}\tilde{h}-\xb\mathbf{R}(\tilde{h}),\quad \mathrm{in}\;(-\infty,-T_0]
$$
where $T_0\geq \tilde{T}_0.$

Let $\eta$ be the function in Lemma \ref{lemodesimpl1}, we look for solutions of the form $\tilde{h}=\tilde{\rho}^0(t)+h$ then $h$ satisfies
\begin{align}\nonumber
h'+\frac{1}{2t}h+\xb D_\rho\mathbf{R}(\tilde{\rho}^0(t))h&=\mathbf{F}(h'+\mathbf{e}\xz'+(\tilde{\rho}^0)',\tilde{h}+\mathbf{e}\xz+\tilde{\rho}^0)
+\frac{n-1}{\xz}\mathbf{e}+\frac{1}{2t}(h+\tilde{\rho}^0(t))\\ \nonumber &-\xb\mathbf{R}(\tilde{\rho}^0(t)+h)+\xb\mathbf{R}(\tilde{\rho}^0(t))+\xb D_\rho\mathbf{R}(\tilde{\rho}^0(t))h+\frac{1}{2t}\mathbf{\xg}\\
&:=\mathbf{E}(h',h)+\frac{1}{2t}\mathbf{\xg},\quad \mathrm{in} \;(-\infty,-T_0]\label{ode1}
\end{align}
where $\mathbf{\xg}=[\xg_1,...,\xg_2]^T.$

Set $v^0=\mathbf{B}\eta$ and $p=\mathbf{B}h.$ Then we have that $\mathbf{E}(h',h)=\mathbf{E}(\mathbf{B}^{-1}h',\mathbf{B}^{-1}h)=\mathbf{E}(p',p),$ and by
$\mathbf{S}(\mathbf{v})=\mathbf{B}\mathbf{R}(\mathbf{B}^{-1}\mathbf{v}),$
we have that
$\mathbf{S}(\mathbf{v^0})=\mathbf{B}\mathbf{R}(\tilde{\rho}^0(t))\mathbf{B}^{-1}.$

Thus (\ref{ode1}) is equivalent to
\be
p'+\frac{1}{2t}p+\xb D_v\mathbf{S}(\mathbf{v^0})p=\mathbf{B}\mathbf{E}(p',p)+\frac{1}{2t}\mathbf{B}\mathbf{\xg}:=\mathbf{L}(p',p)+\frac{1}{2t}\mathbf{B}\mathbf{\xg},\;\;\;\mathrm{in} \;(-\infty,-T_0].\label{ode2}
\ee
By (\ref{ode1}) we have that
\be
\mathbf{L}_k(h',h)=\sum_{i=1}^k(F_i(h'+\mathbf{e}\xz'+(\tilde{\rho}^0)',\tilde{h}+\mathbf{e}\xz+\tilde{\rho}^0)+\frac{n-1}{\xz}
+\frac{1}{2t}(\tilde{\rho}^0_i+h_i)),\label{elamou}
\ee
thus writing $p=(\overline{p},p_k)$ and $\mathbf{L}=(\overline{\mathbf{L}},L_k),$ the latter system decouples as
\bea\nonumber
\overline{p}'+\frac{1}{2t}\overline{p}+\xb D_{\overline{v}}\mathbf{\overline{S}}(\mathbf{\overline{v}^0})&=&\overline{\mathbf{L}}(p',p)+\frac{1}{2t}\mathbf{B}\mathbf{\xg},\;\;\;\mathrm{in} \;(-\infty,-T_0]\\
p_k'+\frac{1}{2t}p_k&=&L_k(p',p),\;\;\;\mathrm{in} \;(-\infty,-T_0].
\label{ode3}
\eea
Now, by (\ref{v}) we have
\bea\nonumber
D_{\overline{v}}\mathbf{\overline{S}}(\mathbf{\overline{v}^0})&=&-\sqrt{2}\mathbf{C} \left[ \begin{array}{ccccc}
e^{-\sqrt{2} v_1} & 0 &\cdots &0 \\
0 &e^{-\sqrt{2} v_2} & \cdots &0  \\
\vdots& &\ddots &\vdots  \\
0 &0 &\cdots & e^{-\sqrt{2}v_{k-1}}  \end{array} \right]\\ \nonumber
&=&\frac{e^{-\sqrt{2}\eta}}{2\xb}\mathbf{C}\left[ \begin{array}{ccccc}
a_1 & 0 &\cdots &0 \\
0 &a_2 & \cdots &0  \\
\vdots& &\ddots &\vdots  \\
0 &0 &\cdots & a_{k-1}  \end{array} \right],
\eea
where $a_l=(k-l)l,\;l=1,...,k-1,$
where the matrix $\mathbf{C}$ is given in (\ref{C}). $\mathbf{C}$ is symmetric and positive definite. Indeed, a straightforward computation yields that its eigenvalues are explicitly given by
$$1,\frac{1}{2},...,\frac{k-1}{k}.$$
We consider the symmetric, positive definite square root matrix of $\mathbf{C}$ and denote it by $\mathbf{C}^{\frac{1}{2}}.$ Then setting
$$\overline{p}=\mathbf{C}^\frac{1}{2}w,\quad \mathbf{Q}(w',p_k',w,p_k)=\mathbf{C}^{-\frac{1}{2}}\overline{\mathbf{L}}(\mathbf{C}^{\frac{1}{2}}w',p_k',\mathbf{C}^{\frac{1}{2}}w,p_k)$$
and
$$Q_k(w',p_k',w,p_k)=\overline{L_k}(\mathbf{C}^{\frac{1}{2}}w',p_k',\mathbf{C}^{\frac{1}{2}}w,p_k)$$
we see that equation (\ref{ode3}) becomes
\begin{align}\nonumber
w'+\frac{1}{2t}w+\frac{e^{-\sqrt{2}\eta(t)}}{2}\mathbf{A}w&=\mathbf{Q}(w',p_k',w,p_k)+\frac{1}{2t}\mathbf{C}^{-\frac{1}{2}}\mathbf{B}\mathbf{\xg},\\
p_k'+\frac{1}{2t}p_k&=Q_k(w',p_k',w,p_k)
\label{ode4}
\end{align}
where
$$\mathbf{A}=\mathbf{C}^\frac{1}{2}\left[ \begin{array}{ccccc}
a_1 & 0 &\cdots &0 \\
0 &a_2 & \cdots &0  \\
\vdots& &\ddots &\vdots  \\
0 &0 &\cdots & a_{k-1}  \end{array} \right]\mathbf{C}^\frac{1}{2}.$$
In particular $\mathbf{A}$ has positive eigenvalues $\xl_1,\xl_2,...,\xl_{k-1}.$ Let the orthogonal matrix $\mathbf{\xL}$ such that $\mathbf{D} = \mathbf{\xL}^T \mathbf{A}\mathbf{\xL},$ where $\mathbf{D}$  is the diagonal matrix such that $D_{ii}=\xl_i,\;i=1,...,k-1.$
Set now
$$\xo=\mathbf{\xL}^Tw,\qquad\mathbf{\xG}(\xo',p_k',\xo,p_k)=\mathbf{\xL}^TQ(\mathbf{\xL}\xo',\mathbf{\xL}\xo),$$
and
$$\xG_k(\xo',p_k',\xo,p_k)=Q_k(\mathbf{\xL}w',p_k',\mathbf{\xL}w,p_k)$$
we have that (\ref{ode4}) becomes equivalent to
\begin{align}\nonumber
\xo'+\frac{1}{2t}\xo+\frac{e^{-\sqrt{2}\eta(t)}}{2}\mathbf{D}\xo&=\mathbf{\xG}(\xo',p_k',\xo,p_k)+\frac{1}{2t}\mathbf{\xd},\quad\mathrm{in} (-\infty,-T_0]\\ \label{odepro}
p_k'+\frac{1}{2t}p_k&=\xG_k(\xo',p_k',\xo,p_k),\quad\mathrm{in} (-\infty,-T_0],
\end{align}
where $\mathbf{\xd}=\mathbf{\xL}^T\mathbf{C}^{-\frac{1}{2}}\mathbf{B}\mathbf{\xg}.$

We will solve (\ref{odepro}) by using the fix point Theorem in a suitable space with initial data $\xo(-T_0)=0$ and $p_k(-T_0)=0.$ If $(\xo,p_k)$ is a solution of the problem (\ref{odepro}) with initial data 0 then has the form
\begin{align}\nonumber
\xo_i(t)&=-\frac{1}{\sqrt{-t}g(t)}\int^{-T_0}_t \sqrt{-s}g(s)\left(\xG_i(\xo',p_k',\xo,p_k)+\frac{\xd_i}{2t}\right)ds,\quad i=1,...,k-1\\ \label{odefix}
p_k&=-\frac{1}{\sqrt{-t}}\int^{-T_0}_t \sqrt{-s}\xG_k(\xo',p_k',\xo,p_k)ds,
\end{align}
where $$g(t)=e^{\frac{1}{2}\int_{t}^{-\tilde{T}_0}e^{-\sqrt{2}\eta(s)}ds},$$
$T_0> \tilde{T}_0$ and $\tilde{T}_0$ has been defined in Lemma \ref{lemodesimpl1}.

By Lemma \ref{lemodesimpl1} we have

\be
\frac{1}{\sqrt{-t}g(t)}\int^{-T_0}_t \frac{\sqrt{-s}g(s)}{-s}ds\leq\frac{1}{g(t)}\int^{-T_0}_t \frac{g(s)}{-s}ds\leq C(\tilde{T}_0)\frac{1}{\log(T_0)}.\label{23}
\ee

Finally by Proposition \ref{remark2} and \eqref{elamou}, we have that there exists constant $C=C(n,k,\xs)>0$ such that

$$|\xG_i(0)|\leq\frac{C}{t}\;\;\forall i=1,...,k.$$
and
$$|\xG_k(h_1)-\xG_k(h_2)|\leq C\left(\frac{\log|t|}{|t|}\right)^{\frac{1}{2}+\frac{\xs}{\sqrt{2}}}||h_1-h_2||_{\xL}.$$

Let $A(\xo,p)$ be a solution of (\ref{odefix}), then  we have
\bea
|A_i(0)|\leq \frac{C_1}{\log(T_0)}\;\forall i=1,..k-1\quad \text{and}\quad|A_k(0)|\leq C_2.\label{fragma}
\eea
Similarly
\be
\frac{|t|}{\log|t|}|A_i(0)|\leq \frac{C_1}{\log(T_0)}\;\forall i=1,..k.\label{fragma2}
\ee
if we choose $T_0>1$ large enough.
We consider the space
$$X=\{(h,p)\in C^1(-\infty,-T_0]:\;||h||_\xL\leq\frac{4C_1}{\log(T_0)}\;\;\;and\;\;\;||p||_\xL\leq4C_2\},$$
where $C_1,\;C_2$ are the constants in (\ref{fragma}) and (\ref{fragma2}).

Now, we have

$$
|A_i(h_1,p_1)-A_i(h_2,p_2)|\leq \frac{C}{\log T_0}\left(||h_1-h_2||_\xL+||p_1-p_2||_\xL\right),\quad\forall i=1,...,k-1
$$
and for some $0<\xa<1$
$$
|A_k(h_1,p_1)-A_k(h_2,p_2)|\leq \frac{C}{T_0^\xa}\left(||h_1-h_2||_\xL+||p_1-p_2||_\xL\right).
$$
Also we have
$$
\frac{|t|}{\log|t|}|A_i'(h_1,p_1)-A_i'(h_2,p_2)|\leq \frac{C}{\log T_0}\left(||h_1-h_2||_\xL+||p_1-p_2||_\xL\right),\quad\forall i=1,...,k-1
$$
and
$$
\frac{|t|}{\log|t|}|A_k(h_1,p_1)-A_k(h_2,p_2)|\leq \frac{C}{T_0^\xa}\left(||h_1-h_2||_\xL+||p_1-p_2||_\xL\right).
$$

The result follows by Banach fixed point theorem, if we choose $T_0$ big enough. We observe that a posteriori, the equation satisfied by $h(t)$ 
yields that $h(t)\to 0$, with precise rate 
$$ 
|h(t)| \ \le \  \frac C {\log|t|}\quad \hbox{as } t\to -\infty .  
$$

\bigskip
\noindent\emph{Acknowledgment } This work has been supported by  Fondecyt grants  3140567 and 1150066, Fondo Basal CMM and by Millenium Nucleus CAPDE NC130017.

\end{document}